\documentclass[11pt,oneside]{amsart}

\usepackage[a4paper]{geometry}
\usepackage[english]{babel}
\usepackage[latin1]{inputenc}
\usepackage[T1]{fontenc}
\usepackage{lmodern}

\usepackage{amsfonts,amssymb}
\usepackage{amsmath}
\usepackage{enumerate}

\usepackage{tikz}
\usepackage[numbers,sort]{natbib}

\newcommand{\Z}{\mathbb{Z}}
\newcommand{\N}{\mathbb{N}}
\newcommand{\PP}{\mathbb{P}}
\newcommand{\EE}{\mathbb{E}}
\newcommand{\RR}{\mathbb{R}}
\newcommand{\CC}{\mathbb{C}}

\newcommand{\X}{X}

\newcommand{\bigO}{{O}}
\newcommand{\Ecal}{\mathcal{E}}

\newcommand{\ovomega}{\overline{\omega}}
\newcommand{\loewleq}{\preceq}

\DeclareMathOperator{\Hess}{Hess}

\newcommand{\sca}[1]{\left\langle #1 \right\rangle} 
\newcommand{\inp}[1]{\langle #1 \rangle} 

\newtheorem{lemma}{Lemma}[section]
\newtheorem{theorem}[lemma]{Theorem}
\newtheorem{proposition}[lemma]{Proposition}
\newtheorem{corollary}[lemma]{Corollary}

\newtheorem*{definition}{Definition}
\newtheorem*{remark}{Remark}

\begin{document}

\title{Partitions of large unbalanced bipartites}
\author[Julien Bureaux]{Julien Bureaux}
\address{Laboratoire Modal'X, Université Paris Ouest Nanterre La Défense\\
    200 avenue de la République, 92\,000 Nanterre}
\email{julien.bureaux@u-paris10.fr}

\keywords{bipartite partitions, statistical mechanics, canonical ensemble, Gibbs measure, partition function, Mellin transform, multivariate local limit theorem}
\subjclass[2010]{05A16, 05A17, 11P82, 60F05, 82B05}

\begin{abstract}
    We compute the asymptotic behaviour of the number of partitions of large vectors $(n_1,n_2)$ of $\mathbb{Z}_+^2$ in the critical regime $n_1 \asymp \sqrt{n_2}$ and in the subcritical regime $n_1 = o(\sqrt{n_2})$. This work completes the results established in the fifties by Auluck, Nanda, and Wright.
\end{abstract}

\maketitle

\section{Introduction}

How many ways are there to decompose a finite-dimensional vector whose components are
non-negative integers as a sum of non-zero vectors of the same kind, up to permutation
of the summands?
The celebrated theory of integer partitions deals with the one-dimensional version of this problem. We refer the reader to the monograph of \citet{andrews_theory_1998} for an account on the subject. In a famous paper, \citet{hardy_asymptotic_1918} discovered the asymptotic behaviour of the number of partitions of a large integer $n$:
\begin{equation}
    \label{eq:hardy-ramanujan}
    p_{\N}(n) \sim \frac{1}{4n\sqrt{3}}\exp\left\{\pi\sqrt{\frac{2n}{3}}\right\}.
\end{equation}

In the two-dimensional setting, the number of partitions of a large vector has been studied by many authors, by various approaches. In the early fifties, the physicist \citet{fermi_angular_1951} introduced thermodynamical models characterized by the conservation of two parameters instead of just one (corresponding to integer partitions). This led \citet{auluck_partitions_1953} to search for an asymptotic expression of the number of partitions
of an integer vector $(n_1,n_2)$. He established formul\ae{} in two very different regimes. His first formula holds when $n_1$ is fixed and $n_2$ tends to infinity:
\begin{equation}
    \label{eq:auluck1}
	p_{\Z_+^2}(n_1,n_2) \sim \frac{1}{n_1!}\left(\frac{\sqrt{6n_2}}{\pi}\right)^{n_1} \frac{1}{4n_2\sqrt{3}} \exp\left\{\pi\sqrt{\frac{2n_2}{3}}\right\}.
\end{equation}
His second one concerns the case where both components $n_1$ and $n_2$ tend to infinity with the same order of magnitude. The corresponding formula is much more involved but it can be simplified in the special case $n_1=n_2=n$. For some explicit constants $a,b,c$, one has
\[
	p_{\Z_+^2}(n,n) \sim \frac{a}{n^\frac{55}{36}} \exp\left\{b\, n^\frac{2}{3} + c\, n^\frac{1}{3}\right\}.
\]
In the late fifties, \citet{nanda_bipartite_1957} managed to extend the domain of validity of Auluck's first formula to the weaker condition $n_1=o(n_2^{1/4})$. Shortly after, \citet{wright_partitions_1958} was able to prove that Auluck's second formula can be extended to the more general regime
\begin{equation*}
    \frac{1}{2} < \liminf \frac{\log n_1}{\log n_2} \leq \limsup \frac{\log
n_1}{\log n_2} < 2.
\end{equation*}
Note finally that \citet*{robertson_asymptotic_1960,robertson_partitions_1962} proved analogous formulae{} in higher dimensions.

   Our article covers the case $n_1 = \bigO(\sqrt{n_2})$, which completes these previous results. In particular, we deal with the case where $n_1$ and $\sqrt{n_2}$ have the same order of magnitude, which appears as a critical regime.

   The papers of Auluck, Nanda and Wright all rely on generating function techniques. At the exception of Nanda's work, which is directly based on integer partition estimates, the main idea is to extract the asymptotic behaviour of the coefficients from the generating function with a Tauberian theorem or a saddle-point analysis. The extension proven by Wright was made possible by a more precise approximation of the generating function.

   \begin{figure}[h]
    \label{fig:regions_diagram}
    \centering
\begin{tikzpicture}
    \draw [->] (0,0) -- (5,0) ;
    \draw (5,0) node[right] {$\log n_1$} ;
    \draw [->] (0,0) -- (0,5) ;
    \draw (0,5) node[above] {$\log n_2$} ;
    \fill [color=gray!10] (0,0) -- (5,1.25) -- (5,2.5) -- cycle ;
    \fill [color=gray!10] (0,0) -- (1.25,5) -- (2.5,5) -- cycle ;
    \draw [dashed,ultra thin] (0,0) -- (2.5,2.5) ;
    \draw [dashed,ultra thin] (5,5) -- (3.5,3.5) ;
    \draw (5,5) node[above right] {$1$} ;
    \draw [ultra thick] (0,0) -- (5,2.5) ;
    \draw (5,2.5) node[right] {$\dfrac{1}{2}$} ;
    \draw [ultra thick] (0,0) -- (2.5,5) ;
    \draw (2.5,5) node[above] {$2$} ;
    \draw (0,0) -- (5,1.25) ;
    \draw (5,1.25) node[right] {$\dfrac{1}{4}$} ;
    \draw (0,0) -- (1.25, 5) ;
    \draw (1.25,5) node[above] {$4$} ;
    \draw (3,3) node {\citet{wright_partitions_1958}} ;
    \draw (4,0.5) node {\citet{nanda_bipartite_1957}} ;
    \draw (0.5,4) node {N.} ;
    \draw (4,1.5) node {?} ;
    \draw (1.5,4) node {?} ;
\end{tikzpicture}
\caption{Phase diagram of the previously studied asymptotic regions. Our work covers the unknown grey region, including the thick critical lines $\frac{1}{2}$ and $2$, as well as Nanda's region.}
\end{figure}
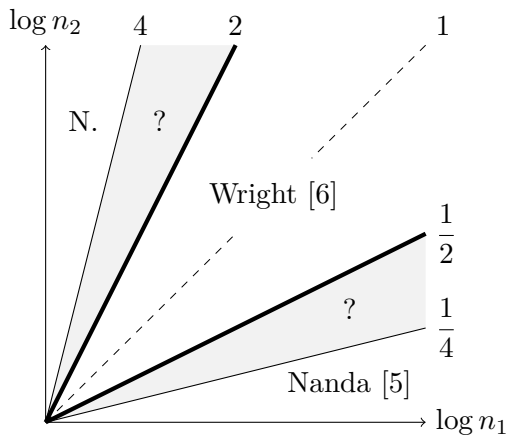

   The method we use in this article differs from the previous ones by relying heavily on a probabilistic embedding of the problem which is inspired by the Boltzmann model in statistical mechanics. The first ingredient of our proof is a precise estimate of the associated logarithmic partition function, based on an contour-integral representation of this function and Cauchy's residue theorem. The second ingredient is a bivariate local limit theorem, which follows from a general framework developed at the end of the paper.
   
   Local limit theorems happen to play a crucial role in the
treatment of questions from statistical mechanics, where they provide a rigorous justification of the \emph{equivalence of ensembles} principle. Twenty years ago, \citet{fristedt_structure_1993} introduced similar ideas to study the structure of uniformly drawn random partitions of
large integers. A few years later, \citet{baez-duarte_hardy-ramanujan_1997} applied a local limit theorem technique to derive a short proof of the Hardy-Ramanujan formula \eqref{eq:hardy-ramanujan}. The first implementation of these ideas in a two-dimensional context seems to be due to \citet{sinai_probabilistic_1994}, although the setting differs from ours. A general presentation of these techniques was discussed by \citet{vershik_statistical_1996}. In a recent paper, \citet{bogachev_universality_2011} presented among other results a detailed
proof of Sina{\u\i}'s approach. Let us mention that the strong anisotropy which is inherent in the problem that we address makes the implementation of this program more delicate.

\section{Notations and statement of the results}
\label{sec:notations}

Let $\Z_+ = \{0,1,2,3,\dots\}$ and $\N = \Z_+ \setminus\{0\}$ denote respectively the set of non-negative numbers and the set of positive integers.

We will use the standard Landau notations $a_n = o(b_n)$ or $a_n = \bigO(b_n)$ for sequences $(a_n)$ and $(b_n)$ satisfying respectively $\limsup \left|\frac{a_n}{b_n}\right| = 0$ or $\limsup \left|\frac{a_n}{b_n}\right| < +\infty$. Also, we will write $a_n \asymp b_n$ if $a_n$ and $b_n$ have the same order of magnitude as $n$ tends to infinity, that is to say if both $a_n = \bigO(b_n)$ and $b_n = \bigO(a_n)$ hold.

\begin{definition}
    Let $\X$ be a subset of $\Z_+^2$. For every $n \in \Z_+^2$, a \emph{partition} of $n$ with \emph{parts} in $X$ is a finite unordered family of elements of $\X$ whose sum is $n$. It can be represented by a \emph{multiplicity} function $\omega \colon X \to \Z_+$ such that $\sum_{x\in \X} \omega(x)\cdot x = n$. For $x \in \X$, we say that $\omega(x)$ is the multiplicity of the part $x$ in the partition. The partitions of $n$ with parts in $X$ constitute the set
    \[
        \Omega_\X(n) := \left\{\omega \in \Z_+^\X : \sum_{x\in\X} \omega(x)\cdot x = n\right\}.
    \]
    Finally, we write $p_X(n) := |\Omega_\X(n)|$ for the number of partitions of $n$ with parts in $\X$.
\end{definition}

Following the works of Wright and Robertson \cite{wright_partitions_1958,robertson_asymptotic_1960,robertson_partitions_1962}, we will focus on two particular sets of parts in this article, namely:
\begin{itemize}
    \item $X=\N^2$ which corresponds to partitions in which no part has a zero component,

    \item $X=\Z_+^2\setminus\{0\}$ which corresponds to the case of general partitions, in which parts may have a zero component. We still have to exclude the zero part in order to ensure that every vector has only finitely many partitions.
\end{itemize}

    The following theorem states the main results of the paper. It describes the asymptotic behaviour of $p_X(n)$ in the case of partitions without zero components as well as in the case of general partitions, outside Wright's region.  First, we need to introduce the following auxiliary functions of $\alpha > 0$:
\begin{gather*}
    \Phi(\alpha) = \sum_{r \geq 1} \frac{1}{r^2}\frac{e^{-\alpha r}}{1 - e^{-\alpha r}},\;
    \Theta(\alpha) = - \frac{\Phi'(\alpha)}{\sqrt{\Phi(\alpha)}},\;
    \overline{\Phi}(\alpha) = \Phi(\alpha) + \frac{\pi^2}{6},\;
    \overline{\Theta}(\alpha) = - \frac{\overline{\Phi}'(\alpha)}{\sqrt{\overline{\Phi}(\alpha)}},\\
    \Psi(\alpha) = \sum_{r \geq 1} \frac{1}{r} \frac{e^{-\alpha r}}{1-e^{-\alpha r}}, \;
    \Delta(\alpha) = 2\Phi(\alpha)\Phi''(\alpha) - \Phi'(\alpha)^2, \;
    \overline{\Delta}(\alpha) = 2\overline{\Phi}(\alpha)\overline{\Phi}''(\alpha) - \overline{\Phi}'(\alpha)^2.
\end{gather*}

Consider two sequences $(n_1(k))_k$ and $(n_2(k))_k$ of positive integers. In the sequel, the limits and asymptotic comparisons are to be understood as $k$ approaches infinity. The index $k$ will remain implicit.
\begin{theorem}
    \label{thm:main}
	Assume that both $n_1$ and $n_2$ tend to infinity under the conditions $n_1 = \bigO(\sqrt{n_2})$ and $\log (n_2) = o(n_1)$. 
    \begin{enumerate}[(ii)]
        \item If $\alpha_n > 0$ is the unique solution of $\Theta(\alpha_n)=\dfrac{n_1}{\sqrt{n_2}}$, then
            \[
                p_{\N^2}(n_1,n_2) \sim \frac{1}{2\pi} \frac{\Phi(\alpha_n)}{n_2}\frac{e^{-\frac{1}{2}\Psi(\alpha_n)}}{\sqrt{\Delta(\alpha_n)}}\exp\left\{\left(\alpha_n \Theta(\alpha_n)+2\sqrt{\Phi(\alpha_n)}\right)\sqrt{n_2}\right\}.
            \]
        \item If $\alpha_n > 0$ is the unique solution of $\overline{\Theta}(\alpha_n)=\dfrac{n_1}{\sqrt{n_2}}$, then
            \[
                p_{\Z_+^2\setminus\{0\}}(n_1,n_2) \sim \frac{1}{(2\pi )^{\frac{3}{2}}}\left(\frac{\overline{\Phi}(\alpha_n)}{n_2}\right)^{\frac{5}{4}}\frac{e^{\frac{1}{2}\Psi(\alpha_n)}}{\sqrt{\overline{\Delta}(\alpha_n)}}\exp\left\{\left(\alpha_n\overline{\Theta}(\alpha_n)+2\sqrt{\overline{\Phi}(\alpha_n)}\right)\sqrt{n_2}\right\}.
            \]
    \end{enumerate}
\end{theorem}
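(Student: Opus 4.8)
The plan is to carry out the Boltzmann--Gibbs programme announced in the introduction. Fix $X\in\{\N^2,\,\Z_+^2\setminus\{0\}\}$, and for $\beta=(\beta_1,\beta_2)\in(0,\infty)^2$ set $Z_X(\beta)=\prod_{x\in X}(1-e^{-\langle\beta,x\rangle})^{-1}$ and let $\PP_\beta$ be the probability on $\Z_+^{X}$ under which the multiplicities $(\omega(x))_{x\in X}$ are independent, $\omega(x)$ geometric of parameter $e^{-\langle\beta,x\rangle}$. With $\Sigma(\omega)=\sum_{x\in X}\omega(x)\,x$ the total weight, the elementary identity
\[
    p_X(n)=Z_X(\beta)\,e^{\langle\beta,n\rangle}\,\PP_\beta\!\left(\Sigma(\omega)=n\right)
\]
holds for every $\beta$, so the proof reduces to choosing $\beta=\beta_n$ well and estimating the three factors. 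The canonical choice is the minimiser of the strictly convex function $F(\beta)=\langle\beta,n\rangle+\log Z_X(\beta)$, i.e.\ the solution of $\EE_{\beta_n}[\Sigma]=-\nabla\log Z_X(\beta_n)=n$; existence and uniqueness for all large $n$ with $n_1=O(\sqrt{n_2})$ follow from strict convexity and a surjectivity check, using that $\Theta$ (resp.\ $\overline\Theta$) decreases bijectively from $(0,\infty)$ onto $(0,\infty)$.

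The first ingredient is a sharp, uniform expansion of $\log Z_X$ via a Mellin transform and Cauchy's residue theorem. For $X=\N^2$ one writes $\log Z_{\N^2}(\beta_1,\beta_2)=\sum_{k\geq1}\tfrac1k(e^{k\beta_1}-1)^{-1}(e^{k\beta_2}-1)^{-1}$ and inserts $(e^{t}-1)^{-1}=\frac{1}{2\pi i}\int_{(c)}\Gamma(s)\zeta(s)\,t^{-s}\,ds$ at $t=k\beta_2$; this gives a contour integral whose integrand carries, besides the factor $\sum_{k\geq1}k^{-1-s}(e^{k\beta_1}-1)^{-1}$, which is entire in $s$ for $\beta_1>0$, only the poles of $\Gamma(s)\zeta(s)$, and pushing the contour left past $s=1$ and $s=0$ (where $\zeta(0)=-\tfrac12$) yields
\[
    \log Z_{\N^2}(\beta_1,\beta_2)=\frac{\Phi(\beta_1)}{\beta_2}-\frac12\Psi(\beta_1)+O(\beta_2),
\]
uniformly on the relevant range. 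For $X=\Z_+^2\setminus\{0\}$ one factorises $\log Z_X=\Psi(\beta_1)+\log Z_{\N}(\beta_2)+\log Z_{\N^2}(\beta_1,\beta_2)$, with $\log Z_{\N}(\beta_1)=\Psi(\beta_1)$ exactly and the classical modular expansion $\log Z_{\N}(\beta_2)=\tfrac{\pi^2}{6\beta_2}+\tfrac12\log\tfrac{\beta_2}{2\pi}+O(\beta_2)$; here $\tfrac{\pi^2}{6\beta_2}$ turns $\Phi$ into $\overline\Phi=\Phi+\pi^2/6$, the term $\tfrac12\log\beta_2$ supplies a factor $\beta_2^{1/2}$, and $\Psi-\tfrac12\Psi=+\tfrac12\Psi$ accounts for the sign change in $e^{\pm\Psi/2}$. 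Differentiating these expansions in the saddle-point system pins down $\beta_n$: one gets first $\beta_{n,2}=\sqrt{\Phi(\beta_{n,1})/n_2}\,(1+O(1/n_2))$, then $\Theta(\beta_{n,1})=n_1/\sqrt{n_2}+O(n_2^{-1/2})$, hence $\beta_{n,1}=\alpha_n+O(1/n_1)$ with $\alpha_n$ exactly as in the statement (with $\overline\Phi,\overline\Theta$ in the general case).

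To evaluate the exponent I would exploit that $\nabla F(\beta_n)=0$: Taylor expansion gives $F(\beta_n)=F(\tilde\beta)-\tfrac12(\tilde\beta-\beta_n)^{\!\top}\Hess\log Z_X(\xi)\,(\tilde\beta-\beta_n)$ at the explicit point $\tilde\beta=(\alpha_n,\sqrt{\Phi(\alpha_n)/n_2})$, and since the entries of $\Hess\log Z_X$ have orders $\beta_2^{-1},\beta_2^{-2},\beta_2^{-3}$ while $\tilde\beta_1-\beta_{n,1}=O(1/n_1)$ and $\tilde\beta_2-\beta_{n,2}=O(1/n_2)$, the correction is $o(1)$; evaluating $F(\tilde\beta)$ from the expansion of $\log Z_X$ and the relation $\Theta(\alpha_n)\sqrt{n_2}=n_1$ then gives
\[
    F(\beta_n)=\bigl(\alpha_n\Theta(\alpha_n)+2\sqrt{\Phi(\alpha_n)}\bigr)\sqrt{n_2}-\tfrac12\Psi(\alpha_n)+o(1)
\]
in the $\N^2$ case, and the same with $\overline\Phi,\overline\Theta$ and $+\tfrac12\Psi$, plus an additional $\tfrac14\log(\overline\Phi(\alpha_n)/n_2)-\tfrac12\log(2\pi)$ coming from the $\tfrac12\log\beta_2$ term, in the general case. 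This is where the hypotheses are used: the remainders are uniformly $o(1)$ precisely because $n_1=O(\sqrt{n_2})$ keeps $\alpha_n$ away from $0$ and because $\log n_2=o(n_1)$ both forces $n_1\to\infty$ and kills the error terms surviving in the subcritical regime $\alpha_n\to\infty$. The second ingredient is a bivariate local limit theorem: since $\Sigma$ has aperiodic law on $\Z^2$ (as $(1,1)\in\N^2$ and $(1,0),(0,1)\in\Z_+^2\setminus\{0\}$), one shows
\[
    \PP_{\beta_n}(\Sigma=n)=\frac{1+o(1)}{2\pi\sqrt{\det\Gamma_n}},\qquad \Gamma_n:=\mathrm{Cov}_{\beta_n}(\Sigma)=\Hess\log Z_X(\beta_n),
\]
with $\det\Gamma_n=\Delta(\beta_{n,1})\beta_{n,2}^{-4}(1+o(1))$ (resp.\ $\overline\Delta$). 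The characteristic function of $\Sigma$ factorises into geometric characteristic functions, but $\Gamma_n$ is strongly anisotropic because $n_1=O(\sqrt{n_2})$, so one must control this product in a carefully chosen non-isotropic neighbourhood of the origin and bound it away from $1$ outside; I would deduce this from the general local-limit framework at the end of the paper after checking its hypotheses (a third-moment-type bound and a uniform non-degeneracy estimate). This anisotropic local limit theorem, together with the uniform control of the $\log Z_X$ remainder as $\alpha_n\to\infty$, is the step I expect to be the main obstacle.

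Finally I would assemble the pieces. Since $Z_X(\beta_n)e^{\langle\beta_n,n\rangle}=e^{F(\beta_n)}$, the identity reads $p_X(n)=(1+o(1))\,e^{F(\beta_n)}(2\pi\sqrt{\det\Gamma_n})^{-1}$; using $\det\Gamma_n^{-1/2}=\Phi(\alpha_n)\bigl(n_2\sqrt{\Delta(\alpha_n)}\bigr)^{-1}(1+o(1))$ (resp.\ $\overline\Phi,\overline\Delta$) and substituting the value of $F(\beta_n)$, the powers of $\beta_{n,2}\sim(\Phi(\alpha_n)/n_2)^{1/2}$ assemble — none from $Z_X$ in the $\N^2$ case, giving the factor $\Phi(\alpha_n)/(n_2\sqrt{\Delta(\alpha_n)})$, and the extra half-power $(\overline\Phi(\alpha_n)/n_2)^{1/4}$ in the general case, giving $(\overline\Phi(\alpha_n)/n_2)^{5/4}/\sqrt{\overline\Delta(\alpha_n)}$ — while the constants combine into $(2\pi)^{-1}$ and $(2\pi)^{-3/2}$ respectively, producing exactly the two formulae of the theorem.
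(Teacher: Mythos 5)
Your proposal follows the same Boltzmann--Gibbs programme as the paper and touches all the right ingredients: the identity $p_X(n)=Z_X(\beta)e^{\langle\beta,n\rangle}\PP_\beta(\Sigma=n)$, the Mellin-transform expansion of $\log Z_X$ (with the $\Gamma\zeta D_\alpha$ integrand and residues at $s=1$ and $s=0$), the anisotropic bivariate local limit theorem checked against the general framework of the final section, and the bookkeeping of the extra $\tfrac12\log\beta_2$ and $+\tfrac12\Psi$ in the case $X=\Z_+^2\setminus\{0\}$. It is essentially correct.

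The one place where your route genuinely diverges from the paper is the calibration of the shape parameters. You propose $\beta_n$ as the \emph{exact} minimiser of $F(\beta)=\langle\beta,n\rangle+\log Z_X(\beta)$, i.e.\ the solution of $\EE_{\beta_n}\Sigma=n$; then, since this $\beta_n$ is implicit in a complicated way, you locate it by comparison with the explicit point $\tilde\beta=(\alpha_n,\sqrt{\Phi(\alpha_n)/n_2})$ and control the correction via a second-order Taylor expansion of $F$. The paper does the opposite: it calibrates $\lambda_n$ directly by the explicit truncated equations $-\Phi'(\alpha_n)/\beta_n=n_1$, $\Phi(\alpha_n)/\beta_n^2=n_2$ (so $\lambda_n$ is exactly the paper's analogue of your $\tilde\beta$), accepts that the offset $\|n-\EE_{\lambda_n}N\|=\bigO(n_1^2/n_2)$ is not zero, and shows via the bound $\Gamma_{\lambda_n}^{-1}(x,x)\lesssim |x_1|^2/n_1 + n_1|x_2|^2/n_2^2$ that this offset makes the Gaussian factor $\exp\{-\tfrac12\|\Gamma^{-1/2}(n-\EE N)\|^2\}$ tend to $1$. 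Both variants have to absorb the same order of error; your version puts it in a Taylor remainder for $F$ at the saddle, the paper puts it in the Gaussian density inside the local limit theorem, which avoids introducing the implicit exact saddle point at all. One small bookkeeping slip in your proposal: the Hessian entries are not merely $\beta_2^{-1},\beta_2^{-2},\beta_2^{-3}$ but carry the additional factor $\Phi^{(p)}(\alpha)\asymp e^{-\alpha}$; this does not affect the $o(1)$ conclusion (the resulting orders $n_1,n_2,n_2^2/n_1$ combined with $\tilde\beta-\beta_n=\bigO(1/n_1,1/n_2)$ give $\bigO(1/n_1)\to 0$), but the statement as written is imprecise.
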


We will present a complete proof of (i) and will state along the proof the additional arguments which are needed for (ii).

Although the formul\ae{} in Theorem \ref{thm:main} involve an
implicit function $\alpha_n$ of $(n_1,n_2)$, remark that we can
actually derive explicit expansions in terms of $(n_1,n_2)$ when
$n_1$ is negligible compared to $\sqrt{n_2}$, which condition is equivalent to
$\alpha_n \to +\infty$.  Notice indeed that the auxiliary functions
$\Phi,\overline{\Phi},\Theta,\overline{\Theta},\Delta,\overline{\Delta}$,
and $\Psi$ admit simple asymptotic expansions in terms of the arithmetic
function $\sigma_2(m) = \sum_{d\mid m} d^2$ as $\alpha \to +\infty$.
This follows from the Lambert series \cite[Section 4.71]{titchmarsh_functions_1976} elementary formul\ae{}
\begin{equation}
    \label{eq:identities_phi}
\Phi(\alpha) = \sum_{m \geq 1}\frac{\sigma_2(m)}{m^2} e^{-\alpha m},\qquad  
-\Phi'(\alpha) = \sum_{m \geq 1} \frac{\sigma_2(m)}{m} e^{-\alpha m}.
\end{equation}
Asymptotic expansions of $\Theta^{-1},\overline{\Theta}{}^{-1}$ can be computed effectively from there by an iterative method or by using the Lagrange reversion formula.

Let us show how these simples ideas allows us to extend the previous results by Nanda and Robertson. For example, the following application of case (i) provides additional corrective terms in the expansion given by \citet[Theorem 2]{robertson_asymptotic_1960} which was stated for the special case $K=1$, that is to say $n_1 = o(n_2^{1/3})$.

\begin{corollary}
    \label{cor:2}
    There exists a sequence $(c_k)$ of rational numbers
     such that for all $K \in \N$, if $n_1$ and $n_2$ tend to $+\infty$ such that $n_1^{2K+1} = o(n_2^K)$ and $\log(n_2) = o(n_1)$,
    \[
        p_{\N^2}(n_1,n_2) \sim \frac{n_1}{n_2} \frac{n_2^{n_1}}{(n_1!)^2}\exp\left\{\sum_{k=1}^{K-1}c_k\, \frac{n_1^{2k+1}}{n_2^k}\right\}.
    \]
\end{corollary}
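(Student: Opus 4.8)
The plan is to feed the $\alpha\to+\infty$ expansions of the auxiliary functions into the formula of Theorem~\ref{thm:main}(i) and to compare the result, term by term, with Stirling's approximation of $\tfrac{n_1}{n_2}\tfrac{n_2^{n_1}}{(n_1!)^2}$. Since $n_1^{2K+1}=o(n_2^K)$ with $K\ge1$ forces $n_1=o(\sqrt{n_2})$, hence in particular $n_1=\bigO(\sqrt{n_2})$, and $\log(n_2)=o(n_1)$ is assumed, Theorem~\ref{thm:main}(i) applies; moreover $\alpha_n\to+\infty$, so that $t:=e^{-\alpha_n}\to0$ and $x:=\Theta(\alpha_n)=n_1/\sqrt{n_2}\to0$, with $n_1^2/n_2\to0$.

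Using the Lambert series \eqref{eq:identities_phi} one has, near $t=0$, $\Phi(\alpha)=t+\bigO(t^2)$, $\Phi''(\alpha)=t+\bigO(t^2)$, $\Psi(\alpha)=t+\bigO(t^2)$, and $\Theta(\alpha)^2=\Phi'(\alpha)^2/\Phi(\alpha)=t\,w(t)$ with $w$ analytic near the origin and $w(0)=1$. Inverting the analytic map $t\mapsto t\,w(t)$ gives $t=x^2\,\tilde w(x^2)$ with $\tilde w$ analytic and $\tilde w(0)=1$, hence $\alpha_n=-\log t=-2\log x+\hat h(x^2)$ and $\sqrt{\Phi(\alpha_n)}=x\,\phi(x^2)$, where $\hat h$ and $\phi$ are analytic near $0$ with $\hat h(0)=0$ and $\phi(0)=1$. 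All these series have \emph{rational} coefficients, because those of $\Phi$ and $\Phi'$ are rational and the operations involved --- products, quotients with unit constant term, the binomial series for the square root, the logarithm of a series with constant term $1$, and reversion of a series with unit linear coefficient --- preserve rationality.

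Since $\Theta(\alpha_n)=n_1/\sqrt{n_2}$ holds exactly, the exponent in Theorem~\ref{thm:main}(i) equals
\[
\alpha_n n_1+2\sqrt{\Phi(\alpha_n)}\sqrt{n_2}
= n_1\bigl(-2\log x+\hat h(x^2)\bigr)+2n_1\phi(x^2)
= n_1\log n_2-2n_1\log n_1+2n_1+n_1\,g\!\left(\tfrac{n_1^2}{n_2}\right),
\]
where $g(y):=\hat h(y)+2\bigl(\phi(y)-1\bigr)=\sum_{k\ge1}c_k y^k$ is analytic near $0$ with rational coefficients and $g(0)=0$, so that $n_1\,g(n_1^2/n_2)=\sum_{k\ge1}c_k\,n_1^{2k+1}/n_2^k$. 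For the prefactor, $\Delta(\alpha)=2\Phi\Phi''-\Phi'^2=t^2\bigl(1+\bigO(t)\bigr)$ gives $\Phi(\alpha_n)/\sqrt{\Delta(\alpha_n)}\to1$, and $\Psi(\alpha_n)\to0$, so $\tfrac1{2\pi}\tfrac{\Phi(\alpha_n)}{n_2}\tfrac{e^{-\Psi(\alpha_n)/2}}{\sqrt{\Delta(\alpha_n)}}\sim\tfrac1{2\pi n_2}$; on the other side Stirling yields $\tfrac{n_1}{n_2}\tfrac{n_2^{n_1}}{(n_1!)^2}\sim\tfrac1{2\pi n_2}\exp\{n_1\log n_2-2n_1\log n_1+2n_1\}$. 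Dividing the two equivalents and extracting the partial sum up to $k=K-1$, the ratio $p_{\N^2}(n_1,n_2)\big/\bigl(\tfrac{n_1}{n_2}\tfrac{n_2^{n_1}}{(n_1!)^2}\exp\{\sum_{k=1}^{K-1}c_k n_1^{2k+1}/n_2^k\}\bigr)$ tends to $\exp\{\sum_{k\ge K}c_k\,n_1^{2k+1}/n_2^k\}$.

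The one genuinely delicate point is to check that this tail vanishes under the hypothesis. Writing it as $n_1(n_1^2/n_2)^K\sum_{j\ge0}c_{K+j}(n_1^2/n_2)^j$ and letting $\rho>0$ be the radius of convergence of $g$, the inner sum is bounded by $(\rho/2)^{-K}\sum_{k\ge K}|c_k|(\rho/2)^k<\infty$ as soon as $n_1^2/n_2\le\rho/2$, which holds eventually since $n_1^2/n_2\to0$; the tail is then $\bigO(n_1^{2K+1}/n_2^K)=o(1)$, which proves the corollary. Here $(c_k)$ is the (hypothesis-independent) sequence of Taylor coefficients of $g$ at the origin; its first values can be obtained explicitly by Lagrange reversion applied to the relation $x^2=\Phi'(\alpha)^2/\Phi(\alpha)$, as indicated after Theorem~\ref{thm:main}, and in particular the case $K=1$ recovers and extends Robertson's formula.
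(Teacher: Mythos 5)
Your proof is correct and follows essentially the same route as the paper: expand the auxiliary functions in $t=e^{-\alpha_n}\to 0$, invert the implicit equation $\Theta(\alpha_n)=n_1/\sqrt{n_2}$ by (analytic, equivalently Lagrange) reversion, insert into Theorem~\ref{thm:main}(i), and match against Stirling's formula. The one place where you are more explicit than the paper is the tail estimate $\sum_{k\ge K}c_k\,n_1^{2k+1}/n_2^k\to 0$, which you justify via the positive radius of convergence of $g$; the paper instead absorbs this into the meaning of its Poincar\'e asymptotic-series notation, so this is a welcome refinement of the same argument rather than a different approach.
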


Notice that the proof of this formula, which is given below, can be directly translated into an effective algorithm. For instance, we give here the first terms of the sequence $(c_k)$ which have been computed with the help of the \textit{Sage} mathematical software \cite{sage}:
\[
    c_1 = \frac{5}{4} ,\; c_2 = -\frac{805}{288} ,\; c_3 = \frac{6731}{576} ,\; c_4 = -\frac{133046081}{2073600} ,\; c_5 = \frac{170097821}{414720}, \;\ldots
\]

In the same way, an application of case (ii) of our theorem leads to an extension of formula~\eqref{eq:auluck1} which was stated under the condition $n_1 = o(n_2^{1/4})$, or equivalently $K=1$, in the work of Nanda~\cite{nanda_bipartite_1957}.

\begin{corollary}
    \label{cor:3}
    There exists a sequence $(\overline c_k)$ of real numbers
    such that for all $K \in \N$, if $n_1$ and $n_2$ tend to $+\infty$ such that $n_1^{K+1} = o(n_2^{K/2})$ and $\log(n_2) = o(n_1)$,
    \[
        p_{\Z_+^2}(n_1,n_2) \sim \frac{1}{n_1!}\left(\frac{\sqrt{6n_2}}{\pi}\right)^{n_1}\frac{1}{4n_2\sqrt{3}}\exp\left\{\pi\sqrt{\frac{2n_2}{3}}\right\}
        \exp\left\{\sum_{k=1}^{K-1} \overline{c}_k\,\frac{n_1^{k+1}}{n_2^{k/2}}\right\}
    \]
\end{corollary}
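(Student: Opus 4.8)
The plan is to derive Corollary~\ref{cor:3} from case (ii) of Theorem~\ref{thm:main} exactly along the lines of the remark following that theorem (and mirroring what one does for Corollary~\ref{cor:2}, with the extra care forced by the square roots and by the constant $\pi^2/6$ sitting inside $\overline{\Phi}$). Throughout, $p_{\Z_+^2}$ is read as $p_{\Z_+^2\setminus\{0\}}$, the two coinciding once the zero part is discarded. First I would check compatibility with the hypotheses of Theorem~\ref{thm:main}: since $n_1^{K+1}=o(n_2^{K/2})$ gives $n_1=o(n_2^{K/(2K+2)})=o(\sqrt{n_2})$, and $\log n_2=o(n_1)$ is assumed, case (ii) applies; moreover $\overline{\Theta}(\alpha_n)=n_1/\sqrt{n_2}\to0$ forces $\alpha_n\to+\infty$.

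Then I would set $x:=e^{-\alpha_n}\to0$ and $t:=n_1/\sqrt{n_2}$, and expand. Using the Lambert series \eqref{eq:identities_phi} together with the analogous $\Psi(\alpha)=\sum_{m\ge1}\tfrac{\sigma_1(m)}{m}e^{-\alpha m}$ and $\overline{\Phi}''(\alpha)=\sum_{m\ge1}\sigma_2(m)e^{-\alpha m}$, all the auxiliary functions become convergent power series in $x$; in particular $\overline{\Phi}(\alpha_n)=\tfrac{\pi^2}{6}+x+\tfrac54 x^2+\cdots$, so that $\overline{\Theta}(\alpha_n)=g(x)$ with $g$ analytic near $0$, $g(0)=0$, $g'(0)=\sqrt6/\pi\neq0$. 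Lagrange reversion then expresses $x$ as an analytic function of $t$ near $0$, namely $x=\tfrac{\pi}{\sqrt6}t+O(t^2)$, whence $\alpha_n=-\log t+h(t)$ with $h$ analytic near $0$ and $h(0)=\log(\sqrt6/\pi)$.

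Next I would substitute into the formula of Theorem~\ref{thm:main}(ii). Using $\alpha_n\overline{\Theta}(\alpha_n)\sqrt{n_2}=\alpha_n n_1$, $\sqrt{\overline{\Phi}(\alpha_n)}=\tfrac{\pi}{\sqrt6}+\tfrac12 t+O(t^2)$, and the identities $t^{j}\sqrt{n_2}=n_1 t^{j-1}$ and $n_1t^{j}=n_1^{j+1}/n_2^{j/2}$, the exponent reorganizes as
\[
    \Bigl(\alpha_n\overline{\Theta}(\alpha_n)+2\sqrt{\overline{\Phi}(\alpha_n)}\Bigr)\sqrt{n_2}
    =\pi\sqrt{\tfrac{2n_2}{3}}+n_1\log\tfrac{\sqrt{6n_2}}{\pi}-n_1\log n_1+n_1+\sum_{j\ge1}\gamma_j\,\frac{n_1^{j+1}}{n_2^{j/2}},
\]
where the $\gamma_j\in\RR$ are built from the Taylor coefficients of $h$ and of $\sqrt{\overline{\Phi}}$. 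For the prefactor one has $\overline{\Phi}(\alpha_n)\to\pi^2/6$, $e^{\Psi(\alpha_n)/2}=1+O(t)$, and $\overline{\Delta}(\alpha_n)=\tfrac{\pi^2}{3}x(1+O(x))=\tfrac{\pi^3}{3\sqrt6}t(1+O(t))$, so the full prefactor equals $\tfrac{1}{4\sqrt3\sqrt{2\pi}}\,n_1^{-1/2}n_2^{-1}(1+o(1))$. Comparing with Stirling's formula $n_1!=\sqrt{2\pi n_1}\,(n_1/e)^{n_1}(1+O(1/n_1))$, the ``main'' part of the expression collapses exactly to $\tfrac{1}{n_1!}\bigl(\tfrac{\sqrt{6n_2}}{\pi}\bigr)^{n_1}\tfrac{1}{4n_2\sqrt3}\exp\{\pi\sqrt{2n_2/3}\}$, i.e.\ Auluck's formula \eqref{eq:auluck1}.

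Finally I would dispose of the tail: since $x\to0$ the series $\sum_j\gamma_j t^j$ converges, hence $\sum_{j\ge K}\gamma_j\,n_1 t^j=O(n_1t^K)=O(n_1^{K+1}/n_2^{K/2})=o(1)$ by hypothesis, and may be swallowed into the $1+o(1)$. Setting $\overline{c}_k:=\gamma_k$ for $1\le k\le K-1$ then gives the stated equivalent; these numbers are real (rather than rational as in Corollary~\ref{cor:2}) precisely because of $\pi^2/6$ inside $\overline{\Phi}$ and the square roots $\sqrt{\overline{\Phi}},\sqrt{\overline{\Delta}}$. The one genuinely delicate point is this truncation step: one must verify that the single condition $n_1^{K+1}=o(n_2^{K/2})$ simultaneously kills the exponent tail, renders the prefactor corrections $o(1)$, and --- through $n_1\to+\infty$ --- legitimizes Stirling's approximation. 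Everything else is routine manipulation of convergent power series, and since $g$, $h$ and $\sqrt{\overline{\Phi}}$ have effectively computable coefficients, the same computation doubles as the algorithm producing the $\overline{c}_k$.
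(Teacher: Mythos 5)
Your proof is correct and follows the same approach the paper takes: the paper only remarks that Corollary~\ref{cor:3} follows ``in the same way'' as Corollary~\ref{cor:2}, and your argument carries out exactly that programme --- inverting $\overline\Theta(\alpha_n)=n_1/\sqrt{n_2}$ as a power series in $e^{-\alpha_n}$, re-expanding the exponent and prefactor in powers of $t=n_1/\sqrt{n_2}$, and applying Stirling's formula --- while correctly accounting for the constant $\zeta(2)$ sitting inside $\overline\Phi$ and the square roots $\sqrt{\overline\Phi},\sqrt{\overline\Delta}$ that make the $\overline c_k$ real rather than rational. The compatibility check $n_1^{K+1}=o(n_2^{K/2})\Rightarrow n_1=o(\sqrt{n_2})$, the identity $t^j\sqrt{n_2}=n_1t^{j-1}$ used to reorganize the exponent, and the truncation estimate $n_1t^K\to 0$ are all handled as the paper intends.
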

An effective computation of the first terms of the sequence $(\overline c_k)$ gives, with $a = \sqrt{\zeta(2)}$,
\[
    \overline c_1 = \frac{5a}{4}-\frac{1}{4a}, \; \overline c_2 = \frac{5}{8} -\frac{145a^2}{72},\; \overline c_3 = 6a^3 - \frac{1385a}{576} + \frac{5}{32a} + \frac{1}{192a^3}, \; \ldots
\]

\begin{proof}[Proof of Corollary \ref{cor:2}]
    As noted above, the condition $n_1 = o(\sqrt{n_2})$ implies that $\alpha_n$ tends to $+\infty$ (see the proof of Proposition~\ref{prop:implicit_parameters} for details). Since the both of $\Phi(\alpha)$ and $\sqrt{\Delta(\alpha)}$ are equivalent to $e^{-\alpha}$ as $\alpha$ tends to $+\infty$, and $\Psi(\alpha)$ tends to $0$, the non-exponential factor in the formula for $p_{\N^2}(n_1,n_2)$ of Theorem~\ref{thm:main} reduces to $\frac{1}{2\pi n_2}$. Also, the identities \eqref{eq:identities_phi} for $\Phi(\alpha)$ and $-\Phi'(\alpha)$ show that we can now work with the formal power series
    \[
        f(z) = \sum_{m=1}^\infty \frac{\sigma_2(m)}{m^2}\,z^m, \quad
        g(z) = \frac{1}{f(z)}\left(-z\frac{d}{dz}f(z)\right)^2.
    \]
    Namely, the equation $\Theta(\alpha_n) = n_1/\sqrt{n_2}$ corresponds formally to $g(z_n)= n_1^2/n_2$ with $z_n := e^{-\alpha_n}$. Since $g(z)$ has no constant term, we obtain by reversion an infinite asymptotic expansion (we use here the Poincaré notation $\sim$ to denote an asymptotic series):
    \[
        e^{-\alpha_n} \sim \frac{n_1^2}{n_2} + \sum_{k=2}^{\infty} a_k\frac{n_1^{2k}}{n_2^k}.
    \]
    for some sequence of rationnals $(a_k)$ expressible with the help of the Lagrange inversion formula. From this point, it is now easy to derive the existence of two expansions (where $(b_k)$ and $(b'_k)$ are two sequences of rational numbers)
    \[
        \alpha_n \sim \log n_2 - 2 \log n_1 + \sum_{k=1}^\infty b_k \frac{n_1^{2k}}{n_2^{k}}, \quad
        \sqrt{\Phi(\alpha_n)} \sim \frac{n_1}{\sqrt{n_2}}\left[ 1 + \sum_{k=2}^\infty b'_k \frac{n_1^{2k}}{n_2^{k}}\right],
    \]
which, together with Theorem \ref{thm:main} and Stirling's formula, prove the result.
\end{proof}

In both Corollary~\ref{cor:2} and Corollary~\ref{cor:3}, notice that the boundary of the domain of validity for the expansions is actually asymptotic, as $K$ grows larger, to the critical regime $n_1 \asymp \sqrt{n_2}$ which is represented by the thick line in Figure~\ref{fig:regions_diagram}. In this critical case, Theorem \ref{thm:main} still applies but does not lead to any much simpler expression since, when $n_1/\sqrt{n_2}$ converges quickly enough to some positive constant, all terms depending on $\alpha_n$ tend to constant coefficients. Still, the theorem provides the existence and some expressions of the exponential rate functions
\[
    h(t) := \lim_{n\to +\infty} \frac{1}{\sqrt{n}} \log p_{\N^2}(\lfloor t\sqrt{n}\rfloor, n), \quad \overline h(t) =
\lim_{n\to +\infty} \frac{1}{\sqrt{n}} \log p_{\Z_+^2\setminus\{0\}}(\lfloor t\sqrt{n}\rfloor, n),
\]
which are defined for all $t > 0$. Figure~\ref{fig:curve} shows the graphs of these functions.

\begin{figure}[h!]
     \label{fig:curve}
     \centering
     \includegraphics[width=0.7\textwidth]{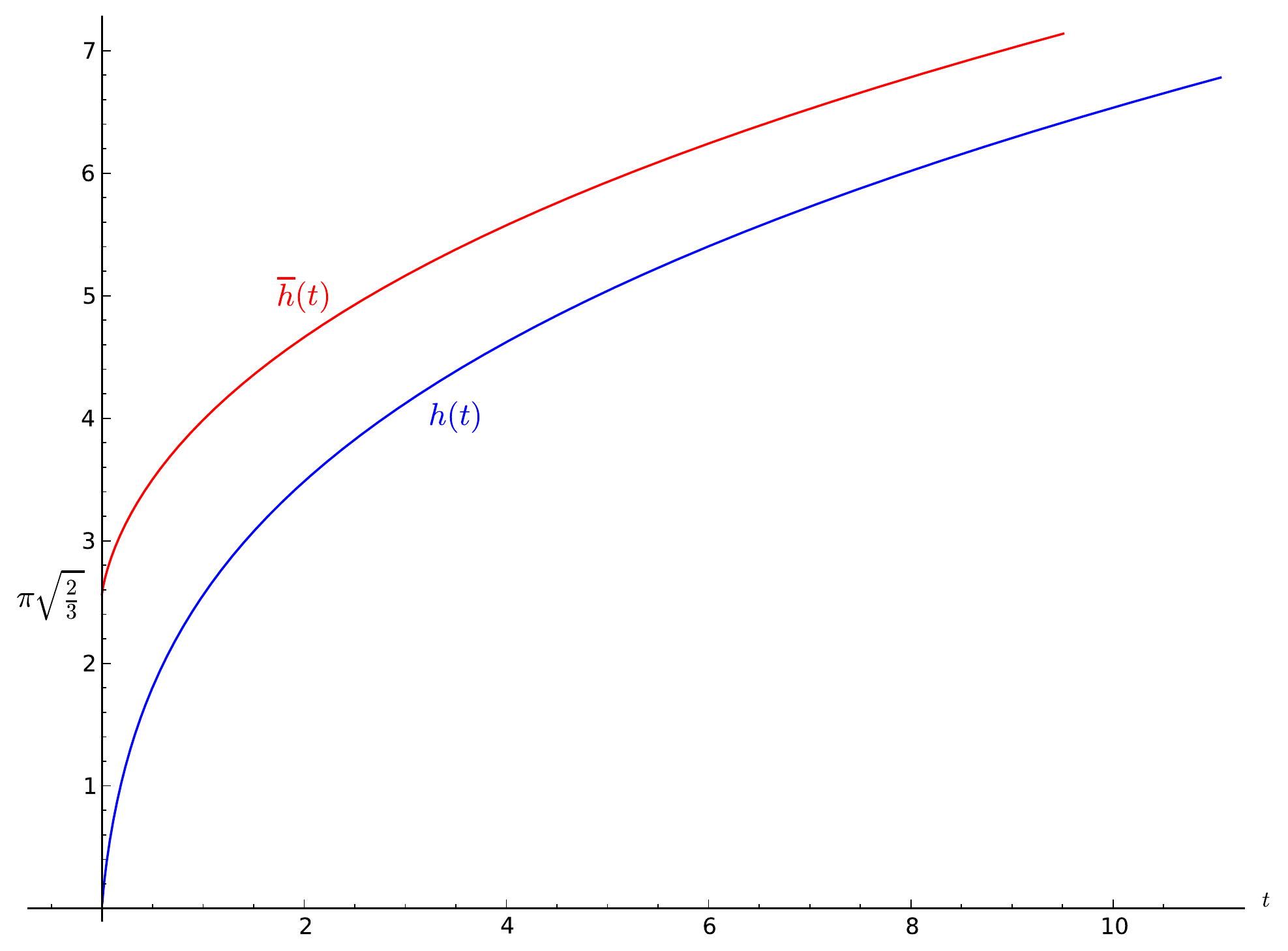}
     \caption{Comparison of the rate functions $h$ and $\overline{h}$ corresponding respectively to partitions without zero components and to partitions where zero components are allowed.}
\end{figure}

\section{The probabilistic model}
\label{sec:model}

In this section, we introduce a family of Gibbs probability measures on the set $\Omega_\X := \bigcup_{n\in \Z_+^2}\Omega_X(n)$ of all partitions, given some fixed set of parts $\X$. The idea is that, while the uniform distribution on the set $\Omega_\X(n)$ of partitions on $n$ is hard to describe, it is much easier to define a distribution on the larger space $\Omega_\X$, that will give the exact same weight to every partition of $n$. Let $\alpha,\beta \in (0,+\infty)$ be two \emph{shape
    parameters} to be chosen later and write $\lambda = (\alpha,\beta)$. To each choice of $\lambda$, we associate a probability measure $\PP_\lambda$ on the discrete space $\Omega_\X$ such that for every $\omega \in \Omega_\X$,
\begin{equation*}
    \PP_\lambda(\omega) := \frac{\exp\left\{-\sum_{x \in X} \omega(x) \langle \lambda, x\rangle\right\}}{\sum_{\omega \in \Omega_X}\exp\left\{-\sum_{x \in X} \omega(x) \langle \lambda, x\rangle\right\}}.
\end{equation*}
For each partition $\omega \in \Omega_X$, let us introduce the key quantity $N(\omega) := \sum_{x\in X} \omega(x)\cdot x$, which we will see as a random variable with values in $\Z_+^2$. By definition, we have $\omega \in \Omega_X(N(\omega))$ for every partition $\omega$. Furthermore, the probability $\PP_\lambda(\omega)$ becomes
\[
    \PP_\lambda(\omega) = \frac{1}{Z_\lambda}e^{-\sca{\lambda, N(\omega)}},\qquad\text{where } Z_\lambda := \sum_{\omega \in \Omega_\X} e^{-\sca{\lambda, N(\omega)}}.
\]
The normalization constant $Z_\lambda$ is usually referred to as the \emph{partition function} of the system in the statistical mechanics literature.

In accordance with the previous discussion, remark that the conditional distribution of $\PP_\lambda$ on $\Omega_X(n)$ yields the uniform measure since the probability of a partition $\omega$ only depends on $N(\omega)$. Moreover, since all partitions of $\Omega_X(n)$ are given equal weight $\frac{1}{Z_\lambda}e^{-\sca{\lambda,n}}$, the total weight of $\Omega_X(n)$ is equal to $\frac{1}{Z_\lambda}p_X(n)e^{-\sca{\lambda,n}}$ and therefore
\begin{equation}
    \label{eq:link}
    p_X(n) = \frac{Z_\lambda}{e^{-\sca{\lambda,n}}} \PP_\lambda(N=n).
\end{equation}
Probabilistic intuition dictates our strategy: calibrate the parameter $\lambda$ as a function of $n$
so that the distribution of the random vector $N$ concentrates around
$n$ under the probability measure $\PP_\lambda$. This way, we will
be able to ensure a polynomial decrease for the quantity
$\PP_\lambda(N=n)$. A natural choice
to enforce this behaviour is to take $\lambda_n = (\alpha_n,\beta_n)$
such that $\EE_\lambda(N)$ is close enough to $n$. This is achieved by
choosing the couple $(\alpha_n,\beta_n)$ defined by the equations~\eqref{eq:parameters}
of Section~\ref{sec:calibration}. Looking back to $p_X(n)$, we see that we
need to estimate precisely the partition function $Z_\lambda$ as well as
$\PP_\lambda(N=n)$. The former will be done by a careful approximation of
$\log Z_\lambda$ in section 4 and the latter will be deduced from estimates
of the first and second derivatives of $\log Z_\lambda$ together with
a Gaussian local limit theorem statement proven in section 5.

Before we turn to more technical discussions, let us remark that under
the probability measure $\PP_\lambda$, the random variables $\omega(x)$
for $x \in \X$ are mutually independent and that their distribution is geometric. More precisely, we have for all $k \in \Z_+$,
\[
    \PP_\lambda(\omega(x) = k) = e^{-k\sca{\lambda,x}} \,(1-e^{-\sca{\lambda,x}}).
\]
Finally, a fruitful consequence of the independence in this model is the fact that the partition function $Z_\lambda$ can be written as an infinite product: 
\begin{equation}
    \label{eq:product-formula}
    Z_\lambda = \prod_{x \in \X} \frac{1}{1-e^{-\sca{\lambda,x}}}.
\end{equation}
Let us mention that~\eqref{eq:product-formula} is the bipartite partition analogue of the famous Euler product formula for the usual partition generating function.

\section{Approximation of the logarithmic partition function}
\label{sec:approximation}

Because of the product formula \eqref{eq:product-formula}, the logarithm of the partition function $Z_\lambda$ can be expressed as the sum of an absolutely convergent series:
\[
\log Z_\lambda = -\sum_{x\in \X} \log(1-e^{-\sca{\lambda,x}})
 = \sum_{x\in\X} \sum_{r= 1}^\infty \frac{e^{-r\sca{\lambda,x}}}{r}.
\]
Let us recall that we consider the case $\X=\N^2$ of partitions whose parts have non-zero components. The logarithmic partition function thus writes
\begin{equation}
    \label{eq:logZ-intermediate}
    \log Z_\lambda = \sum_{x_1\geq 1}\sum_{x_2\geq 1} \sum_{r \geq 1} \frac{e^{-\alpha x_1 r}}{r}e^{-\beta x_2 r}.
\end{equation}
Let $\zeta$ and $\Gamma$ denote respectively the Riemann zeta function and the Euler gamma function \citep{titchmarsh_functions_1976}. Also consider for every $\alpha > 0$ and $s \in \CC$ the Dirichlet series $D_\alpha(s)$ defined by
\[
    D_\alpha(s) := \sum_{k \geq 1}\sum_{r \geq 1} \frac{e^{-\alpha k r}}{r^s} = \sum_{r \geq 1} \frac{1}{r^s}\frac{e^{-\alpha r}}{1-e^{-\alpha r}}.
\]
Recalling that the Cahen-Mellin inversion formula~\cite{titchmarsh_functions_1976} yields for every $c > 0$ and $t > 0$,
\[
    e^{-t} = \frac{1}{2i\pi} \int_{c-i\infty}^{c+i\infty} \Gamma(s)t^{-s}\,ds,
\]
we can rewrite the identity \eqref{eq:logZ-intermediate} for every $c > 1$ as
\begin{align*}
    \log Z_\lambda & = \frac{1}{2i\pi} \sum_{x_1 = 1}^\infty \sum_{x_2 = 1}^\infty\sum_{r = 1}^\infty \int_{c-i\infty}^{c+i\infty} \frac{e^{-r\alpha x_1}}{r} \Gamma(s)(r\beta x_2)^{-s}\,ds \\
                   & = \frac{1}{2i\pi}\int_{c-i\infty}^{c+i\infty} \Gamma(s)\zeta(s)D_\alpha(s+1)\frac{ds}{\beta^s}, 
\end{align*}
the exchange in the order of summation being justified by the Fubini theorem. We proved that the logarithmic partition function admits an integral representation of the form
\[
\log Z_\lambda = \frac{1}{2i\pi}\int_{c-i\infty}^{c+i\infty} M_\alpha(s)\frac{ds}{\beta^s},
\]
where $M_\alpha(s) := \Gamma(s)\zeta(s)D_\alpha(s+1)$. We will now see how to recover from the residues of the meromorphic function $M_\alpha$ the asymptotic behaviour of $\log Z_\lambda$ and its derivatives when $\beta$ tends towards $0$.

\begin{proposition}
    \label{prop:approximation}
    For every non-negative integers $m,p,q$, there exists a decreasing function $C_m^{p,q}(\alpha)$ of $\alpha > 0$ with a positive limit as $\alpha \to \infty$, such that the remainder function $R_m(\alpha,\beta)$ defined by
    \[
        R_m(\alpha,\beta) := \log Z_{(\alpha,\beta)} - \frac{D_\alpha(2)}{\beta} - \sum_{k=0}^m \frac{(-1)^{k}\zeta(-k)D_\alpha(1-k)}{k!} \beta^k
    \]
    satisfies $|\partial_ {\vphantom{\beta}\alpha}^{p} \partial_\beta^{\vphantom{p}q} R_{m}(\alpha,\beta)| \leq C_m^{p,q}(\alpha) \,e^{-\alpha}\beta^{m-q+\frac{1}{2}}$ for all $\alpha,\beta > 0$.
\end{proposition}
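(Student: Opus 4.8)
The plan is to evaluate the integral representation $\log Z_{(\alpha,\beta)}=\frac{1}{2i\pi}\int_{(c)}M_\alpha(s)\beta^{-s}\,ds$, valid for every $c>1$, by pushing the contour to the left past the poles of the integrand, and then to identify $R_m(\alpha,\beta)$ with the shifted integral. The first step is to record the analytic structure of $M_\alpha(s)=\Gamma(s)\zeta(s)D_\alpha(s+1)$. Since $e^{-\alpha r}/(1-e^{-\alpha r})\le e^{-\alpha r}/(1-e^{-\alpha})$ for every $r\ge1$, the series $D_\alpha(s)=\sum_{r\ge1}r^{-s}\,e^{-\alpha r}/(1-e^{-\alpha r})$ converges locally uniformly on all of $\CC$, so $D_\alpha$ is entire; consequently the only singularities of $M_\alpha$ are the simple pole of $\zeta$ at $s=1$ and the simple poles of $\Gamma$ at $s=0,-1,-2,\dots$. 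Using $\Res_{s=1}\zeta(s)=1$ and $\Res_{s=-k}\Gamma(s)=(-1)^k/k!$ one obtains
\[
    \Res_{s=1}M_\alpha(s)\beta^{-s}=\frac{D_\alpha(2)}{\beta},\qquad \Res_{s=-k}M_\alpha(s)\beta^{-s}=\frac{(-1)^k\zeta(-k)D_\alpha(1-k)}{k!}\,\beta^{k}\quad(k=0,1,\dots,m),
\]
which are exactly the terms subtracted off in the definition of $R_m$.

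To carry out the shift, apply Cauchy's residue theorem to the rectangle with vertices $c\pm iT$ and $-m-\tfrac12\pm iT$ and let $T\to\infty$. On the two horizontal sides the integrand tends to $0$ uniformly: $|\Gamma(s)|=\bigO(e^{-\pi T/2})$ uniformly for $\Re(s)$ in the bounded interval $[-m-\tfrac12,c]$ by Stirling's formula, $|\zeta(s)|$ grows at most polynomially in $T$ there, $|D_\alpha(s+1)|\le (1-e^{-\alpha})^{-1}\sum_{r\ge1}r^{m-1/2}e^{-\alpha r}<\infty$ is bounded on the half-plane $\Re(s)\ge-m-\tfrac12$, and $|\beta^{-s}|\le\max(\beta^{-c},\beta^{m+1/2})$. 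Since $-m-\tfrac12$ is not a pole, this identifies
\[
    R_m(\alpha,\beta)=\frac{1}{2i\pi}\int_{-m-\frac12-i\infty}^{-m-\frac12+i\infty}\Gamma(s)\zeta(s)D_\alpha(s+1)\,\beta^{-s}\,ds .
\]

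It remains to bound the mixed derivatives of this last integral. Expanding $e^{-\alpha r}/(1-e^{-\alpha r})=\sum_{j\ge1}e^{-j\alpha r}$ gives $D_\alpha(s+1)=\sum_{r,j\ge1}e^{-j\alpha r}r^{-(s+1)}$; differentiating term by term (again legitimate by local uniform convergence) yields $\partial_\alpha^{p}D_\alpha(s+1)=(-1)^p\sum_{r,j\ge1}j^{p}e^{-j\alpha r}r^{-(s+1-p)}$, whereas $\partial_\beta^{q}\beta^{-s}=\bigl(\prod_{i=0}^{q-1}(-s-i)\bigr)\beta^{-s-q}$. On the line $\Re(s)=-m-\tfrac12$ one has $|r^{-(s+1-p)}|=r^{m+p-1/2}$ uniformly in $\Im(s)$ and $|\beta^{-s-q}|=\beta^{m-q+1/2}$, hence
\[
    |\partial_\alpha^{p}D_\alpha(s+1)|\le\Bigl(\sum_{r,j\ge1}j^{p}r^{m+p-1/2}e^{-j\alpha r}\Bigr)=:C_{m,p}'(\alpha)\,e^{-\alpha},
\]
where $C_{m,p}'(\alpha)=\sum_{r,j\ge1}j^{p}r^{m+p-1/2}e^{-(jr-1)\alpha}$ is finite for each $\alpha>0$, is decreasing in $\alpha$ because the term $(r,j)=(1,1)$ equals the constant $1$ while every other term is strictly decreasing, and tends to $1$ as $\alpha\to\infty$ by dominated convergence. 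Differentiation under the integral sign is legitimate because $|\Gamma(s)\zeta(s)|\prod_{i=0}^{q-1}|s+i|$ is integrable on the line (the exponential decay of $\Gamma$ on vertical lines absorbs all polynomial factors) and the $(\alpha,\beta)$-dependent factors are locally bounded; it gives
\[
    |\partial_\alpha^{p}\partial_\beta^{q}R_m(\alpha,\beta)|\le\frac{C_{m,p}'(\alpha)}{2\pi}\Bigl(\int_{\Re(s)=-m-\frac12}|\Gamma(s)\zeta(s)|\prod_{i=0}^{q-1}|s+i|\,|ds|\Bigr)\,e^{-\alpha}\,\beta^{m-q+1/2},
\]
so $C_m^{p,q}(\alpha)$ may be taken to be $C_{m,p}'(\alpha)$ times the finite positive constant in parentheses.

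The computation is routine Mellin asymptotics; the only point demanding care is organizing the estimates so that, after an arbitrary number $p$ of $\alpha$-derivatives, the prefactor of $e^{-\alpha}\beta^{m-q+1/2}$ is simultaneously finite, decreasing in $\alpha$, and bounded below by a positive constant as $\alpha\to\infty$. Isolating the $(r,j)=(1,1)$ contribution of the Lambert-type double series — precisely the term that produces the leading $e^{-\alpha}$ — is what makes all three properties visible at once.
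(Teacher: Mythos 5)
Your proof is correct and follows the same approach as the paper: express $\log Z$ via the inverse Mellin transform, shift the contour to $\Re(s)=-m-\tfrac12$ collecting residues of $\Gamma(s)\zeta(s)D_\alpha(s+1)\beta^{-s}$ at $s=1,0,-1,\dots,-m$, and bound the mixed derivatives of the remaining vertical integral by separating the $\tau$-integral (absolute constant) from the factor $|\partial_\alpha^p D_\alpha|$ evaluated on the line. Your explicit isolation of the $(r,j)=(1,1)$ term in the Lambert double series to exhibit $C_{m}^{p,q}$ as decreasing with a positive limit is a cleaner way of phrasing what the paper leaves implicit in the line $C_m^{p,q}(\alpha)e^{-\alpha}=\frac{|\partial_\alpha^p D_\alpha(\gamma_m+1)|}{2\pi}\int|\zeta||\Gamma|\,d\tau$, but the two are the same function.
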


\begin{proof}
    Let us recall that the Riemann function $\zeta(s)$ is meromorphic on $\CC$ and that it has a unique pole at $s=1$, at which the residue is $1$. The Euler gamma function $\Gamma(s)$ is also meromorphic on $\CC$ and has poles at every integer $k \leq 0$.

Let $m$ be a positive integer and $\gamma_m = -m-\frac{1}{2}$. We are going to apply Cauchy's residue theorem to the meromorphic function $M_\alpha$ with the rectangular contour $C$ defined by the segments $[2-iT,2+iT]$, $[2+iT,\gamma_m+iT]$, $[\gamma_m+iT,\gamma_m-iT]$ and $[\gamma_m-iT,2-iT]$, where $T > 0$ is some positive real we will let go to infinity. Computation of the residues of $M_\alpha$ in this stripe yields
\[
\frac{1}{2i\pi}\int_{C^+}M_\alpha(s)\frac{ds}{\beta^s} = \frac{D_\alpha(2)}{\beta} + \sum_{k=0}^m \frac{(-1)^k\zeta(-k)D_\alpha(1-k)}{k!}\beta^k.
\]
In order to prove that the contributions of the horizontal segments in the left-hand side integral vanish for $T \to \infty$, we use the three following facts:
\begin{enumerate}[(iii)]
    \item From the complex version of Stirling's formula, we know that $|\Gamma(\sigma+i\tau)|$ decreases exponentially fast when $|\tau|$ tends to $+\infty$, uniformly in every bounded stripe \citep[p. 151]{titchmarsh_functions_1976}.
    \item Also $|\zeta(\sigma + i\tau)|$ is polynomially bounded in $|\tau|$ as $|\tau| \to +\infty$, uniformly in every bounded stripe \citep[p. 95]{titchmarsh_riemann_1986}.
    \item Finally, note that for all $\sigma_0 \in \RR$,
        \[
            \sup_{\sigma \geq \sigma_0} \left|D_\alpha(\sigma + i\tau)\right| \leq D_\alpha(\sigma_0) = \sum_{r\geq 1} \frac{1}{r^{\sigma_0}}\frac{e^{-\alpha r}}{1-e^{-\alpha r}} < \infty.
        \]
\end{enumerate}
From these observations, we see that the integrals along horizontal lines vanish as $T \to \infty$:
\[
	\lim_{T\to+\infty} \int_{2+iT}^{\gamma_m+iT} M_\alpha(s) \dfrac{ds}{\beta^s} = \lim_{T\to+\infty} \int_{\gamma_m-iT}^{2-iT} M_\alpha(s) \dfrac{ds}{\beta^s} = 0.
\]
Furthermore, $M_\alpha(s)\beta^{-s}$ is integrable on the vertical line $(\gamma_m-i\infty,\gamma_m+i\infty)$, so that in the limit $T \to \infty$, we obtain
\[
	\log Z_\lambda = \frac{D_\alpha(2)}{\beta}+\sum_{k=0}^m \frac{(-1)^k\zeta(-k)D_\alpha(1-k)}{k!}\beta^k + \frac{1}{2i\pi}\int_{\gamma_m-i\infty}^{\gamma_m+i\infty} M_\alpha(s)\frac{ds}{\beta^s}.
\]
Thus, the remainder function $R_m(\alpha,\beta)$ is actually equal to the integral term in the right-hand side. We need to control its derivatives. Note that the derivatives
\[
    \partial_{\vphantom{\beta}\alpha}^p \partial_\beta^q \;\frac{M_\alpha(s)}{\beta^s} = (-1)^q \frac{\Gamma(s+q)}{\beta^{s+q}} \zeta(s)\partial_\alpha^p D_\alpha(s+1)
\]
are well defined and integrable on the vertical line $(\gamma_m-i\infty,\gamma_m+i\infty)$ thanks again to the facts (i) and (ii), as well as the analogue of (iii) for the $\partial_\alpha^p$ derivative of $D_\alpha$. It is easy to check that the result follows with
\[
    C_m^{p,q}(\alpha) \,e^{-\alpha} = \frac{|\partial_\alpha^p D_\alpha(\gamma_m+1)|}{2\pi} \int_{-\infty}^\infty |\zeta(\gamma_m+i\tau)||\Gamma(\gamma_m+q+i\tau)| \,d\tau.\qedhere
\]
\end{proof}

\begin{remark}
    In the case $X = \Z_+^2\setminus\{0\}$, the logarithmic partition function becomes $\log Z_\lambda + \Psi(\alpha) + \Psi(\beta)$, where $\Psi(\cdot) = D_\cdot(1)$ is the function defined in Section~\ref{sec:notations}. The two additional terms correspond respectively to horizontal and vertical one-dimensional integer partitions. In that case, the expansion as $\beta \to 0^+$ involves the expansion of $\Psi(\beta)$ (which was the basis of
    \citep{hardy_asymptotic_1918}) and can be written informally as
    \[
        \frac{\Phi(\alpha)+\zeta(2)}{\beta} + \frac{1}{2}\log \beta + \frac{\Psi(\alpha)}{2} - \frac{1}{2}\log (2\pi) + \left(\frac{D_\alpha(0)}{12}-\frac{1}{24}\right)\beta + \dots
    \]
\end{remark}

\section{Calibration of the shape parameters}
\label{sec:calibration}

In this section, we find appropriate values for the parameters $\lambda = (\alpha,\beta)$ as functions of $n$ for which $\EE_\lambda(N)$ is asymptotically close to $n$.
Since the distribution of the random vector $N$ under $\PP_\lambda$ is given by a Gibbs measure,
\[
    \EE_\lambda(N) = - \mathrm{grad}\,(\log Z_\lambda) = 
    - \begin{bmatrix}
        \partial_\alpha \log Z_\lambda\\
        \partial_\beta \log Z_\lambda
    \end{bmatrix}.
\]
Let us recall that by definition, the function $\Phi$ introduced in Section~\ref{sec:notations} is
\[
    \forall \alpha > 0,\qquad \Phi(\alpha) := \sum_{r\geq 1} \frac{1}{r^2}\frac{e^{-\alpha r}}{1-e^{-\alpha r}},
\]
which is exactly $D_\alpha(2)$ for the Dirichlet series $D_\alpha(s)$ introduced in Section~\ref{sec:approximation}. The approximation given in Proposition~\ref{prop:approximation} applied with $(m,p,q) = (1,1,0)$ and $(m,p,q)=(1,0,1)$ yields the existence of two decreasing functions $C_1$ and $C_2$ with finite limits as $\alpha \to +\infty$ such that
\[
    \left| \partial_\alpha \log Z_\lambda - \frac{\Phi'(\alpha)}{\beta}\right| \leq e^{-\alpha}C_1(\alpha)\qquad\text{and} \qquad \left|\partial_\beta \log Z_\lambda +\frac{\Phi(\alpha)}{\beta^2} \right| \leq e^{-\alpha} C_2(\alpha).
\]
This justifies the choice made in the next proposition to define the shape parameters $\alpha_n$ and $\beta_n$ through the implicit equations~\eqref{eq:parameters}. We start with a lemma.

\begin{lemma}
    \label{lem:log-convexity}
    The function $\Phi$ is logarithmically convex on $(0;+\infty)$. 
\end{lemma}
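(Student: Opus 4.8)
The plan is to show that $\log \Phi$ has non-negative second derivative on $(0,+\infty)$, or equivalently that $\Phi(\alpha)\Phi''(\alpha) - \Phi'(\alpha)^2 \geq 0$ for all $\alpha > 0$. The natural route is to recognize $\Phi$, up to the weights $1/r^2$, as a moment-type expression and apply the Cauchy--Schwarz inequality. Recall from \eqref{eq:identities_phi} that $\Phi(\alpha) = \sum_{m\geq 1} \frac{\sigma_2(m)}{m^2} e^{-\alpha m}$. Writing $a_m := \sigma_2(m)/m^2 > 0$, we have $\Phi(\alpha) = \sum_m a_m e^{-\alpha m}$, hence $\Phi'(\alpha) = -\sum_m m\, a_m e^{-\alpha m}$ and $\Phi''(\alpha) = \sum_m m^2 a_m e^{-\alpha m}$. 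Thus the desired inequality $\Phi\Phi'' \geq (\Phi')^2$ reads
\[
    \Bigl(\sum_m a_m e^{-\alpha m}\Bigr)\Bigl(\sum_m m^2 a_m e^{-\alpha m}\Bigr) \;\geq\; \Bigl(\sum_m m\, a_m e^{-\alpha m}\Bigr)^2,
\]
which is exactly Cauchy--Schwarz applied to the vectors with components $\sqrt{a_m e^{-\alpha m}}$ and $m\sqrt{a_m e^{-\alpha m}}$ (equivalently, the statement that $\mathrm{Var} \geq 0$ for the random variable taking value $m$ with weight proportional to $a_m e^{-\alpha m}$).

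The only point requiring care is convergence: I must check that all three series converge absolutely for every fixed $\alpha > 0$, so that the termwise differentiation of $\Phi$ is legitimate and Cauchy--Schwarz applies to genuine $\ell^2$ sequences. This is immediate since $\sigma_2(m) = \sum_{d \mid m} d^2 \leq m^2 \cdot d(m) = \bigO(m^{2+\varepsilon})$, so $m^2 a_m = \sigma_2(m) = \bigO(m^{2+\varepsilon})$ and the exponential factor $e^{-\alpha m}$ dominates; hence $\sum_m m^2 a_m e^{-\alpha m} < \infty$, and the $\Phi$ and $\Phi'$ series converge a fortiori. The differentiation under the summation sign is then justified on any compact subinterval of $(0,+\infty)$ by uniform convergence.

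I expect no real obstacle here; the main (minor) subtlety is simply that log-convexity of $\Phi$ as a function on $(0,+\infty)$ is slightly stronger than what one would get from a pointwise Cauchy--Schwarz at a single $\alpha$ — but since the inequality $\Phi(\alpha)\Phi''(\alpha) \geq \Phi'(\alpha)^2$ holds for \emph{every} $\alpha$, log-convexity on the whole interval follows at once from the characterization $(\log \Phi)'' = \Delta(\alpha)/\Phi(\alpha)^2 \geq 0$, where $\Delta$ is the function $\Delta(\alpha) = 2\Phi(\alpha)\Phi''(\alpha) - \Phi'(\alpha)^2$ defined in Section~\ref{sec:notations} (note $\Delta \geq \Phi\Phi'' - (\Phi')^2 \geq 0$ since $\Phi, \Phi'' > 0$). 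Alternatively, and perhaps more cleanly, one can invoke the classical fact that a sum $\sum_m a_m e^{-\alpha m}$ with all $a_m \geq 0$ is automatically log-convex in $\alpha$, being a ``mixture'' of the log-linear (hence log-convex) functions $\alpha \mapsto e^{-\alpha m}$; this is H\"older's inequality in disguise and avoids any explicit second-derivative computation. Either presentation yields the lemma in a few lines.
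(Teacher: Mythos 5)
Your argument is correct and in essence the same as the paper's: both express $\Phi$, $\Phi'$, $\Phi''$ as series with nonnegative coefficients in $e^{-\alpha}$ (the paper uses the double index $(k,r)$ in $\Phi(\alpha)=\sum_{k,r\geq 1}e^{-\alpha kr}/r^2$, while you use the collapsed Lambert form \eqref{eq:identities_phi}) and then apply Cauchy--Schwarz to obtain $\Phi(\alpha)\Phi''(\alpha)\geq\Phi'(\alpha)^2$. One slip in your closing aside: $(\log\Phi)''=\bigl(\Phi\Phi''-(\Phi')^2\bigr)/\Phi^2$, not $\Delta/\Phi^2$; the quantity $\Delta=2\Phi\Phi''-(\Phi')^2$ from Section~\ref{sec:notations} carries an extra $\Phi\Phi''$ and is the (rescaled) determinant of $\Sigma_\lambda$, not the numerator of the second logarithmic derivative.
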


\begin{proof}
    The function $\log \Phi$ being smooth, its convexity is equivalent to the inequality
    \[
        \frac{d^2}{d\alpha^2} (\log \Phi(\alpha)) = \frac{\Phi(\alpha)\Phi''(\alpha)-\Phi'(\alpha)^2}{\Phi(\alpha)^2} \geq 0,
    \]
    which follows immediately from the Cauchy-Schwarz inequality since
    \[
        \Phi(\alpha) = \sum_{k \geq 1}\sum_{r \geq 1} \frac{e^{-\alpha k r}}{r^2},\qquad \Phi'(\alpha) = -\sum_{k\geq 1}\sum_{r \geq 1} \frac{k e^{-\alpha k r}}{r}\quad\text{and}\quad \Phi''(\alpha) = \sum_{k\geq 1}\sum_{r \geq 1} k^2 e^{-\alpha k r}.\qedhere
    \]
\end{proof}

\begin{proposition}
    \label{prop:implicit_parameters}
    For all $n = (n_1,n_2) \in \N^2$, there exists a unique couple $(\alpha_n,\beta_n) \in (0;+\infty)^2$ such that
    \begin{equation}
        \label{eq:parameters}
            \dfrac{-\Phi'(\alpha_n)}{\beta_n} = n_1\qquad\text{and}\qquad
            \dfrac{\Phi(\alpha_n)}{\beta_n^2} = n_2.
    \end{equation}
\end{proposition}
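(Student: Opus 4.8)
The plan is to decouple the two equations in~\eqref{eq:parameters} by eliminating $\beta_n$, reducing everything to a one-variable bijectivity statement. Since $\Phi$ is a sum of strictly decreasing positive functions, $\Phi'(\alpha)<0$ for all $\alpha>0$, so from the first equation we must have $\beta_n=-\Phi'(\alpha_n)/n_1>0$. Substituting this into the second equation and simplifying, one sees that $(\alpha_n,\beta_n)\in(0,+\infty)^2$ solves~\eqref{eq:parameters} if and only if
\[
    \Theta(\alpha_n)=\frac{-\Phi'(\alpha_n)}{\sqrt{\Phi(\alpha_n)}}=\frac{n_1}{\sqrt{n_2}}\qquad\text{and}\qquad\beta_n=\frac{-\Phi'(\alpha_n)}{n_1}.
\]
Hence it suffices to prove that $\Theta$ is a bijection from $(0,+\infty)$ onto $(0,+\infty)$: then $\alpha_n$ is the unique preimage of $n_1/\sqrt{n_2}$ and $\beta_n$ is determined.

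The next step is the monotonicity of $\Theta$. It is smooth and positive on $(0,+\infty)$, and a direct differentiation gives $\Theta'(\alpha)=-\tfrac12\,\Phi(\alpha)^{-3/2}\,\Delta(\alpha)$ with $\Delta=2\Phi\Phi''-(\Phi')^2$ as in Section~\ref{sec:notations}. Writing $\Delta=\bigl(\Phi\Phi''-(\Phi')^2\bigr)+\Phi\Phi''$, the first bracket is non-negative by Lemma~\ref{lem:log-convexity}, and $\Phi\Phi''>0$ because $\Phi>0$ and $\Phi''(\alpha)=\sum_{k,r\geq 1}k^2e^{-\alpha k r}>0$. Therefore $\Delta>0$ and $\Theta'<0$ on all of $(0,+\infty)$, so $\Theta$ is strictly decreasing.

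It then remains to compute the boundary values. Using the Lambert-series identities~\eqref{eq:identities_phi}, as $\alpha\to+\infty$ both $\Phi(\alpha)$ and $-\Phi'(\alpha)$ are asymptotic to $e^{-\alpha}$, so $\Theta(\alpha)\sim e^{-\alpha/2}\to 0$; while as $\alpha\to 0^+$, term-by-term estimates (justified by monotone convergence) give $\Phi(\alpha)\sim\zeta(3)/\alpha$ and $-\Phi'(\alpha)\sim\zeta(3)/\alpha^2$, whence $\Theta(\alpha)\sim\sqrt{\zeta(3)}\,\alpha^{-3/2}\to+\infty$. By continuity and strict monotonicity, $\Theta\colon(0,+\infty)\to(0,+\infty)$ is a bijection. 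Setting $\alpha_n:=\Theta^{-1}(n_1/\sqrt{n_2})$ and $\beta_n:=-\Phi'(\alpha_n)/n_1$ yields a solution of~\eqref{eq:parameters}; conversely any solution forces $\Phi'(\alpha)^2/\Phi(\alpha)=n_1^2/n_2$, i.e.\ $\Theta(\alpha)=n_1/\sqrt{n_2}$, hence $\alpha=\alpha_n$ and then $\beta=\beta_n$, giving uniqueness. As a byproduct, since $\Theta$ is a decreasing bijection, $\alpha_n\to+\infty$ precisely when $n_1=o(\sqrt{n_2})$, the fact invoked in the proof of Corollary~\ref{cor:2}.

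There is no serious obstacle here; the only point requiring a moment's thought is the strict monotonicity, and the key observation is that the sign-determining quantity $\Delta$ is governed by the log-convexity of $\Phi$ (Lemma~\ref{lem:log-convexity}) together with the trivial positivity of $\Phi''$. The $\alpha\to 0^+$ asymptotics also need the mild care of interchanging a limit with an infinite sum, which monotone (or dominated) convergence handles.
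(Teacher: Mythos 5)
Your proof is correct and follows essentially the same route as the paper: eliminate $\beta_n$ to reduce to the strict monotonicity of $\Theta=-\Phi'/\sqrt{\Phi}$, derive that monotonicity from the log-convexity of $\Phi$ (Lemma~\ref{lem:log-convexity}), and close with the same boundary limits at $0^+$ and $+\infty$. The only cosmetic difference is that you differentiate $\Theta$ explicitly and show $\Delta=2\Phi\Phi''-(\Phi')^2>0$, whereas the paper observes that $\tfrac12\Phi'/\sqrt{\Phi}$ is the derivative of the strictly convex function $\sqrt{\Phi}=\exp(\tfrac12\log\Phi)$ and hence increasing; your decomposition $\Delta=(\Phi\Phi''-(\Phi')^2)+\Phi\Phi''$ is in fact slightly more careful about strictness.
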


\begin{proof}
    Eliminating $\beta_n$ in the implicit equations~\eqref{eq:parameters}, we need only prove the existence of a unique $\alpha_n > 0$ such that
    \[
        \frac{1}{2}\frac{\Phi'(\alpha_n)}{\sqrt{\Phi(\alpha_n)}} = -\frac{1}{2}\frac{n_1}{\sqrt{n_2}}.
    \]
    We recognize the derivative of the function $\sqrt{\Phi(\alpha)} = \exp\left(\frac{1}{2}\log \Phi(\alpha)\right)$ which is strictly convex by Lemma~\ref{lem:log-convexity}, so that $\Phi'/\sqrt{\Phi}$ is continuously increasing. In addition, it is easy to check that
    \[
        \lim_{\alpha \to 0^+} \frac{\Phi'(\alpha)}{\sqrt{\Phi(\alpha)}} = \lim_{\alpha \to 0^+} \frac{-\zeta(3)/\alpha^2}{\sqrt{\zeta(3)/\alpha}} = -\infty 
        \quad\text{and}\quad
        \lim_{\alpha \to +\infty} \frac{\Phi'(\alpha)}{\sqrt{\Phi(\alpha)}} = \lim_{\alpha\to+\infty} \frac{-e^{-\alpha}}{\sqrt{e^{-\alpha}}} = 0.
    \]
    This concludes the proof.
\end{proof}

Recall that the assumptions of Theorem~\ref{thm:main} are $n_1 \to +\infty$ and $n_2 \to +\infty$ with the conditions $n_1 = \bigO(\sqrt{n_2})$ and $\log(n_2)=o(n_1)$. Consider the couple of parameters $\alpha_n$ and $\beta_n$ defined by the equations~\eqref{eq:parameters}. A consequence of the proof above is that, under these assumptions, the sequence $\alpha_n$ is bounded away from $0$ (but not from $+\infty$). In addition,
\begin{equation}
    \label{eq:limit-parameters}
    \frac{e^{-\alpha_n}}{\beta_n} \asymp n_1
    ,\qquad \frac{e^{-\alpha_n}}{\beta_n^2} \asymp n_2 
    , \qquad\text{and}\qquad \beta_n \asymp \frac{n_1}{n_2}.
\end{equation}

\section{The local limit theorem and its application}

In the sequel, the parameters $\lambda_n = (\alpha_n,\beta_n)$ are
chosen according to the equations~\eqref{eq:parameters}. The aim of
the present section is to show that the random vector $N$ satisfies a
local limit theorem. Sufficient conditions for such a theorem to hold
are given in Proposition~\ref{prop:framework} of the next section. We check
that these conditions are satisfied for our model. Finally, an application of Proposition~\ref{prop:framework} leads to the proof of Theorem~\ref{thm:main}.

\subsection{An estimate of the covariance matrix}

The assumptions of Proposition~\ref{prop:framework} require a good estimate of the covariance matrix $\Gamma_{\lambda_n}$ of the random vector $N$ under the measure $\PP_{\lambda_n}$.
Since we have a Boltzmann-type model with a Gibbs measure $\PP_\lambda$, the covariance matrix $\Gamma_\lambda$ of $N$ is simply given by the second derivatives of the logarithmic partition function,
\[
    \Gamma_\lambda = \Hess (\log Z_\lambda) =
        \EE_\lambda\left(\begin{bmatrix}
            \partial_\alpha^2 \log Z_\lambda & \partial_\alpha\partial_\beta \log Z_\lambda\\
            \partial_\alpha\partial_\beta \log Z_\lambda & \partial_\beta^2 \log Z_\lambda
	\end{bmatrix} \right).
\]
Denoting by $\Sigma_\lambda$ the symmetric matrix
\[
	\Sigma_\lambda  := \begin{bmatrix}
         \dfrac{\Phi''(\alpha)}{\beta} & -\dfrac{\Phi'(\alpha)}{\beta^2}\\
         -\dfrac{\Phi'(\alpha)}{\beta^2} & \dfrac{2\Phi(\alpha)}{\beta^3}\\
     \end{bmatrix},
\]
an application of Proposition \ref{prop:approximation} with $m=2$ and $p+q=2$ yields the existence of a positive decreasing function $C(\alpha)$ with a positive limit as $\alpha\to\infty$ such that
\begin{equation}
    \label{eq:error_gamma}
    \left\| \Gamma_\lambda - \Sigma_\lambda \right\| \leq e^{-\alpha}C(\alpha).
\end{equation}

We can now state two crucial consequences of this approximation concerning $\Gamma_{\lambda_n}$. The first one concerns the precise asymptotic behaviour of its determinant while the second one shows that its eigenvalues go to $+\infty$. Let us recall that the function $\Delta(\alpha)$ introduced in Section~\ref{sec:notations} is defined by
\[
    \Delta(\alpha) := 2\Phi(\alpha)\Phi''(\alpha)-\Phi'(\alpha)^2 = 
	 \det \begin{bmatrix}
		\Phi''(\alpha) & -\Phi'(\alpha)\\
		- \Phi'(\alpha) & 2 \Phi(\alpha)
	\end{bmatrix}.
\]

\begin{proposition}
    \label{prop:determinant}
    Under the assumptions of Theorem~\ref{thm:main}, $\det \Gamma_{\lambda_n} \sim \dfrac{\Delta(\alpha_n)}{\beta_n^4} \asymp (n_2)^2$.
\end{proposition}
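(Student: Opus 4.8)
The plan is to combine the matrix perturbation bound \eqref{eq:error_gamma} with the asymptotic size estimates \eqref{eq:limit-parameters} for the parameters. First I would write $\det \Gamma_{\lambda_n} = \det(\Sigma_{\lambda_n} + E_n)$ where $E_n := \Gamma_{\lambda_n} - \Sigma_{\lambda_n}$ satisfies $\|E_n\| \leq e^{-\alpha_n}C(\alpha_n)$. A direct computation gives $\det \Sigma_{\lambda_n} = \frac{1}{\beta_n}\cdot\frac{2\Phi(\alpha_n)}{\beta_n^3} - \frac{\Phi'(\alpha_n)^2}{\beta_n^4} = \frac{\Delta(\alpha_n)}{\beta_n^4}$, which is the claimed main term. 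Since $\alpha_n$ is bounded away from $0$ but may tend to $+\infty$, I would note that $\Delta(\alpha) = 2\Phi\Phi'' - (\Phi')^2$ is, up to bounded factors, of order $e^{-\alpha}$ as $\alpha \to +\infty$ (the dominant $r=1,k=1$ term in each series), hence $\Delta(\alpha_n) \asymp e^{-\alpha_n}$ on the relevant range; combined with $\beta_n^{-4} \asymp (n_2/e^{-\alpha_n})^2 \cdot e^{\alpha_n}\beta_n^{-2}$-type bookkeeping from \eqref{eq:limit-parameters}, one gets $\Delta(\alpha_n)/\beta_n^4 \asymp (n_2)^2$. Concretely: $e^{-\alpha_n}/\beta_n^2 \asymp n_2$ gives $\beta_n^{-4} \asymp n_2^2 e^{-2\alpha_n}$, so $\Delta(\alpha_n)\beta_n^{-4} \asymp e^{-\alpha_n} \cdot n_2^2 e^{-2\alpha_n} $ — wait, that is not $n_2^2$, so I would instead be more careful, using $\Delta(\alpha_n) \asymp e^{-2\alpha_n}$ from the leading behaviour $\Phi \asymp \Phi'' \asymp e^{-\alpha}$ forcing $\Phi\Phi'' \asymp e^{-2\alpha}$, which indeed yields $\Delta(\alpha_n)\beta_n^{-4} \asymp e^{-2\alpha_n} n_2^2 e^{-2\alpha_n}$; this still looks off, so the bookkeeping needs the sharper relation and I would track it via $\det\Sigma_{\lambda_n} = \Phi(\alpha_n)/\beta_n^2 \cdot \bigl(2\Phi''(\alpha_n)/\beta_n^2 - \Phi'(\alpha_n)^2/(\Phi(\alpha_n)\beta_n^2)\bigr)$ and use $\Phi(\alpha_n)/\beta_n^2 = n_2$ exactly from \eqref{eq:parameters}, reducing the problem to showing $2\Phi''(\alpha_n)/\beta_n^2 - \Phi'(\alpha_n)^2/(\Phi(\alpha_n)\beta_n^2) \asymp n_2$.

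The cleaner route, which I would adopt, is to factor out exactly: from \eqref{eq:parameters}, $\Phi(\alpha_n) = n_2 \beta_n^2$ and $-\Phi'(\alpha_n) = n_1 \beta_n$, so
\[
\det \Sigma_{\lambda_n} = \frac{\Delta(\alpha_n)}{\beta_n^4} = \frac{2\Phi(\alpha_n)\Phi''(\alpha_n) - \Phi'(\alpha_n)^2}{\beta_n^4} = \frac{2 n_2 \Phi''(\alpha_n)}{\beta_n^2} - \frac{n_1^2}{\beta_n^2}.
\]
Then $\frac{1}{\beta_n^2} \asymp \frac{n_2}{e^{-\alpha_n}} \asymp n_2 e^{\alpha_n}$, while $\Phi''(\alpha_n) \asymp e^{-\alpha_n}$ for $\alpha_n$ large (and is bounded between positive constants for $\alpha_n$ in a compact set away from $0$), so $\frac{2n_2\Phi''(\alpha_n)}{\beta_n^2} \asymp n_2^2$. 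For the subtracted term, $\frac{n_1^2}{\beta_n^2} \asymp n_1^2 n_2 e^{\alpha_n}$, and using $n_1 = \bigO(\sqrt{n_2})$ together with $n_1^2 \asymp e^{-\alpha_n}/\beta_n^2 \asymp \Phi(\alpha_n)/\beta_n^2 \cdot (\text{ratio})$ — more precisely $n_1^2/n_2 = \Theta(\alpha_n)^2$ is bounded since $n_1 = \bigO(\sqrt{n_2})$ — one gets $\frac{n_1^2}{\beta_n^2} = \bigO(n_2^2)$. A short argument via log-convexity (Lemma~\ref{lem:log-convexity}), namely $\Delta(\alpha) = 2\Phi^2 (\log\Phi)'' + \Phi'^2 \geq \Phi'^2 > 0$, shows $2n_2\Phi''/\beta_n^2 - n_1^2/\beta_n^2 = \Delta(\alpha_n)/\beta_n^4 \geq \Phi'(\alpha_n)^2/\beta_n^4 = n_1^2/\beta_n^2 > 0$, confirming the main term genuinely dominates and the difference is $\asymp n_2^2$; the lower bound $\Delta(\alpha_n)/\beta_n^4 \gtrsim n_2^2$ also follows from $\Delta(\alpha) \gtrsim e^{-2\alpha}$ (the $r=k=1$ contributions give $2\Phi\Phi'' - \Phi'^2 \geq 2 e^{-2\alpha} - $ lower-order, with care at small $\alpha$ where $\Delta(\alpha) \to$ a positive constant multiple of $\alpha^{-3}$).

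Finally, to pass from $\det\Sigma_{\lambda_n}$ to $\det\Gamma_{\lambda_n}$, I would use the standard perturbation estimate: for $2\times 2$ matrices, $|\det(\Sigma + E) - \det \Sigma| \leq 2\|\Sigma\|\,\|E\| + \|E\|^2$. Here $\|\Sigma_{\lambda_n}\| \asymp \frac{\Phi(\alpha_n)}{\beta_n^3} \asymp n_2/\beta_n \asymp n_2^{3/2}\sqrt{n_2/e^{-\alpha_n}}$, which is at most of order $n_2^{3/2} e^{\alpha_n/2}$, whereas $\|E_n\| \leq e^{-\alpha_n} C(\alpha_n) = \bigO(e^{-\alpha_n})$ since $C$ is bounded; thus $\|\Sigma_{\lambda_n}\|\|E_n\| = \bigO(n_2^{3/2} e^{-\alpha_n/2}) = o(n_2^2)$ using that $e^{-\alpha_n} = \bigO(1)$ and in fact $e^{-\alpha_n} \asymp n_1^2/n_2 = \bigO(1)$, which makes this error $\bigO(n_2^{3/2})$. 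Since the main term is $\asymp n_2^2$, the error is negligible and $\det\Gamma_{\lambda_n} = \det\Sigma_{\lambda_n}(1 + o(1)) = \frac{\Delta(\alpha_n)}{\beta_n^4}(1+o(1)) \asymp n_2^2$, as claimed. The main obstacle is the careful bookkeeping of how $\alpha_n$, $\beta_n$, $n_1$, $n_2$ interrelate through \eqref{eq:parameters} and \eqref{eq:limit-parameters} when $\alpha_n$ is allowed to diverge — in particular checking uniformly (for $\alpha_n$ both bounded and large) that both $\Delta(\alpha_n)/\beta_n^4 \asymp n_2^2$ and that the perturbation $\|\Sigma_{\lambda_n}\|\|E_n\|$ is of strictly smaller order; the condition $\log n_2 = o(n_1)$, equivalently a lower bound on how fast $e^{-\alpha_n}$ can decay relative to $n_2^{-1}$, is what ultimately guarantees the error terms stay subdominant.
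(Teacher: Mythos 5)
Your proposal is correct and follows essentially the same route as the paper: decompose $\Gamma_{\lambda_n}=\Sigma_{\lambda_n}+E_n$, identify $\det\Sigma_{\lambda_n}=\Delta(\alpha_n)/\beta_n^4$, use log-convexity of $\Phi$ to bound $\Delta$ from below (the paper uses $\Delta\geq\Phi\Phi''$ where you use $\Delta\geq\Phi'^2$, both giving $\Delta(\alpha_n)/\beta_n^4\asymp n_2^2$), and check that the determinant perturbation, of order $\|\Sigma_{\lambda_n}\|\,\|E_n\|\asymp n_1n_2$, is $o(n_2^2)$ because $n_1=\bigO(\sqrt{n_2})$. The intermediate sign slips in the exponent of $e^{\alpha_n}$ are self-corrected and the final bookkeeping is sound; only your closing attribution is off, since it is the hypothesis $n_1=\bigO(\sqrt{n_2})$, not $\log n_2=o(n_1)$, that keeps the error subdominant here.
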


\begin{proof}
    Using the approximation~\eqref{eq:error_gamma} and the fact that for all integers $p \geq 0$, one has $\Phi^{(p)}(\alpha) = (-1)^p e^{-\alpha} + \bigO(e^{-2\alpha})$ as $\alpha \to +\infty$, we need to prove that $\Delta(\alpha)$ does not vanish for $\alpha > 0$ and that $e^{-2\alpha_n}/\beta_n^3$ is negligible compared to $\Delta(\alpha_n)/\beta_n^4$. By Lemma~ \ref{lem:log-convexity}, $\Phi$ is logarithmically convex, so that we have $\Phi'(\alpha)^2 \leq
    \Phi''(\alpha)\Phi(\alpha)$. As a consequence, $\Delta(\alpha) > 0$ for all $\alpha > 0$. In addition, the estimates~\eqref{eq:limit-parameters} imply that
    \[
        \frac{\Delta(\alpha_n)}{\beta_n^4} \asymp \frac{e^{-2\alpha_n}}{\beta_n^4} \asymp (n_2)^2 \qquad\text{and}\qquad \frac{e^{-2\alpha_n}}{\beta_n^3} \asymp n_1 n_2,
    \]
    which is enough to conclude because $n_1 = O(\sqrt{n_2})$ is negligible compared to $n_2$.
\end{proof}

Let us denote  by $\Gamma_\lambda(x,x) = \inp{x, \Gamma_\lambda x}$ for $x\in \RR^2$ the quadratic form on $\RR^2$ induced by the symmetric positive-definite matrix $\Gamma_\lambda$.

\begin{proposition}
    \label{prop:bounds_Gamma}
    Under the assumptions Theorem~\ref{thm:main}, there exist positive constants $C_1,C_{-1}$ such that for all $x \in \RR^2$,
    \begin{gather}
        \label{eq:bound_Gamma}
        \Gamma_{\lambda_n}(x,x) \leq C_1\left( n_1 |x_1|^2 + \frac{(n_2)^2}{n_1} |x_2|^2\right),\\
        \label{eq:bound_Gamma_inv}
        \Gamma_{\lambda_n}^{-1}(x,x) \leq C_{-1}\left(\frac{1}{n_1} |x_1|^2 + \frac{n_1}{(n_2)^2} |x_2|^2\right).
\end{gather}
\end{proposition}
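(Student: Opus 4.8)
The strategy is to reduce the bounds on the quadratic forms $\Gamma_{\lambda_n}(x,x)$ and $\Gamma_{\lambda_n}^{-1}(x,x)$ to the corresponding bounds for the explicit matrix $\Sigma_{\lambda_n}$, using the operator-norm estimate~\eqref{eq:error_gamma} to absorb the error term. First I would write down $\Sigma_{\lambda_n}$ entrywise via the calibration relations~\eqref{eq:limit-parameters}: since $\Phi^{(p)}(\alpha_n) = (-1)^p e^{-\alpha_n} + \bigO(e^{-2\alpha_n})$ and $\alpha_n$ is bounded away from $0$, each entry $\Phi''(\alpha_n)/\beta_n$, $-\Phi'(\alpha_n)/\beta_n^2$, $2\Phi(\alpha_n)/\beta_n^3$ is, up to bounded multiplicative constants, equal to $e^{-\alpha_n}/\beta_n$, $e^{-\alpha_n}/\beta_n^2$, $e^{-\alpha_n}/\beta_n^3$ respectively, i.e. (using~\eqref{eq:limit-parameters}) of order $n_1$, $n_2$, and $n_2^2/n_1$. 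So $\Sigma_{\lambda_n}$ has the claimed scaling, and I would verify directly that $\Sigma_{\lambda_n}(x,x) = \Phi''(\alpha_n)\beta_n^{-1}|x_1|^2 - 2\Phi'(\alpha_n)\beta_n^{-2}x_1x_2 + 2\Phi(\alpha_n)\beta_n^{-3}|x_2|^2 \leq C(n_1|x_1|^2 + (n_2^2/n_1)|x_2|^2)$, the cross term being controlled by $2|x_1 x_2| \cdot n_2 \leq n_1|x_1|^2 + (n_2^2/n_1)|x_2|^2$ via AM--GM.

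Next I must transfer this from $\Sigma_{\lambda_n}$ to $\Gamma_{\lambda_n}$. Since $\|\Gamma_{\lambda_n}-\Sigma_{\lambda_n}\| \leq e^{-\alpha_n}C(\alpha_n) = \bigO(1)$ (bounded, because $\alpha_n$ is bounded away from $0$), and since the smallest entry of $\Sigma_{\lambda_n}$ along the diagonal, namely the $(1,1)$-entry of order $n_1$, tends to $+\infty$ (as $n_1 \to \infty$), the error term is negligible relative to the diagonal of $\Sigma_{\lambda_n}$: we get $\Gamma_{\lambda_n}(x,x) \leq \Sigma_{\lambda_n}(x,x) + \bigO(1)(|x_1|^2+|x_2|^2) \leq C_1(n_1|x_1|^2 + (n_2^2/n_1)|x_2|^2)$, which proves~\eqref{eq:bound_Gamma}. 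For the inverse~\eqref{eq:bound_Gamma_inv}, I would compute $\Sigma_{\lambda_n}^{-1}$ explicitly; its determinant is $\det\Sigma_{\lambda_n} = \Delta(\alpha_n)/\beta_n^4 \asymp n_2^2$ (by the argument in Proposition~\ref{prop:determinant}), and Cramer's rule gives $\Sigma_{\lambda_n}^{-1} = (\det\Sigma_{\lambda_n})^{-1}\begin{bmatrix} 2\Phi(\alpha_n)/\beta_n^3 & \Phi'(\alpha_n)/\beta_n^2 \\ \Phi'(\alpha_n)/\beta_n^2 & \Phi''(\alpha_n)/\beta_n\end{bmatrix}$, whose entries are of orders $1/n_1$, $1/n_2$, $n_1/n_2^2$, yielding $\Sigma_{\lambda_n}^{-1}(x,x) \leq C((1/n_1)|x_1|^2 + (n_1/n_2^2)|x_2|^2)$ again by AM--GM on the cross term.

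The one genuine subtlety — and the main obstacle — is passing from $\|\Gamma_{\lambda_n}-\Sigma_{\lambda_n}\| = \bigO(1)$ to a bound on $\Gamma_{\lambda_n}^{-1}$: a bounded perturbation of a matrix need not give a controlled perturbation of the inverse unless we know the original matrix is quantitatively nondegenerate. Here the smallest eigenvalue of $\Sigma_{\lambda_n}$ is $\det\Sigma_{\lambda_n}$ divided by roughly the largest eigenvalue, i.e. of order $n_2^2 / (n_2^2/n_1) = n_1 \to \infty$, so $\Sigma_{\lambda_n}$ is invertible with $\|\Sigma_{\lambda_n}^{-1}\| = \bigO(1/n_1) \to 0$. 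Consequently $\Gamma_{\lambda_n} = \Sigma_{\lambda_n}(I + \Sigma_{\lambda_n}^{-1}(\Gamma_{\lambda_n}-\Sigma_{\lambda_n}))$ with the correction term of operator norm $\bigO(1/n_1) \to 0$, so for $k$ large the Neumann series converges and $\Gamma_{\lambda_n}^{-1} = (I + \bigO(1/n_1))\Sigma_{\lambda_n}^{-1}$, whence $\Gamma_{\lambda_n}^{-1}(x,x) \leq (1+o(1))\Sigma_{\lambda_n}^{-1}(x,x)$, giving~\eqref{eq:bound_Gamma_inv}. For the finitely many small $k$ the statement is vacuous after possibly enlarging $C_{-1}$, so the asymptotic argument suffices.
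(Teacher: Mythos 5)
Your proof is correct and, for the upper bound~\eqref{eq:bound_Gamma}, proceeds essentially as the paper does: bound the quadratic form $\Sigma_{\lambda_n}(x,x)$ entrywise, absorb the off-diagonal term by AM--GM, and pass from $\Sigma_{\lambda_n}$ to $\Gamma_{\lambda_n}$ using~\eqref{eq:error_gamma} together with the observation that $e^{-\alpha_n}C(\alpha_n) = \bigO(1)$ is negligible against the diagonal scales $n_1$ and $n_2^2/n_1$, both of which tend to infinity.

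Where you diverge is the treatment of the inverse bound~\eqref{eq:bound_Gamma_inv}. The paper establishes a two-sided L\"owner sandwich $C_- D' \loewleq \Gamma_{\lambda_n} \loewleq C_+ D'$ (with $D'$ diagonal), the lower bound coming from the sharp constant $|\Phi'|/\sqrt{\Phi\Phi''} \leq 1$ furnished by Lemma~\ref{lem:log-convexity}, and then inverts the left inequality using the order-reversing property of matrix inversion. You instead compute $\Sigma_{\lambda_n}^{-1}$ explicitly by Cramer's rule, bound its quadratic form by AM--GM, and transfer to $\Gamma_{\lambda_n}^{-1}$ via a Neumann-series perturbation argument; the invertibility control $\|\Sigma_{\lambda_n}^{-1}\| = \bigO(1/n_1)$ that this requires is imported from $\det\Sigma_{\lambda_n} \asymp n_2^2$ (which itself relies on $\Delta(\alpha) > 0$, i.e.\ on log-convexity, so the same lemma enters, just indirectly). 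Both routes are sound. The paper's L\"owner argument is slightly cleaner and avoids inverting $\Sigma_{\lambda_n}$ explicitly, while yours is a little more computational and needs the determinant estimate upfront; you should also take care to write the perturbation $\Gamma_{\lambda_n} = \Sigma_{\lambda_n}^{1/2}\bigl(I + \Sigma_{\lambda_n}^{-1/2}(\Gamma_{\lambda_n}-\Sigma_{\lambda_n})\Sigma_{\lambda_n}^{-1/2}\bigr)\Sigma_{\lambda_n}^{1/2}$ in symmetric form, since the ordering $\Gamma_{\lambda_n}^{-1}(x,x) \leq (1+o(1))\Sigma_{\lambda_n}^{-1}(x,x)$ you want is a statement about symmetric quadratic forms and does not follow directly from the non-symmetric factorization $\Gamma_{\lambda_n} = \Sigma_{\lambda_n}(I + \Sigma_{\lambda_n}^{-1}(\Gamma_{\lambda_n}-\Sigma_{\lambda_n}))$. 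That is a small presentational fix, not a gap.
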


\begin{proof}
    Let us first prove that the inequality~\eqref{eq:bound_Gamma} holds for $\Sigma_{\lambda_n}$ instead of $\Gamma_{\lambda_n}$.
	For every $x=(x_1,x_2) \in \RR^2$, the log-convexity of $\Phi$ (Lemma~\ref{lem:log-convexity}) $|\Phi'(\alpha)|^2 \leq \Phi''(\alpha)\Phi(\alpha)$ 
	and the inequality between arithmetic mean and geometric mean yields
	\[
		\left|\frac{\Phi'(\alpha)}{\beta^2}x_1x_2\right| \leq
		\sqrt{\frac{\Phi''(\alpha)}{\sqrt{2}\beta}|x_1|^2}\sqrt{\frac{\sqrt{2}\Phi(\alpha)}{\beta^3}|x_2|^2}
		\leq \frac{1}{2\sqrt{2}} \frac{\Phi''(\alpha)}{\beta}|x_1|^2 +
		\frac{\sqrt{2}}{2}\frac{\Phi(\alpha)}{\beta^3}|x_2|^2,
	\]
    which implies that for the positive constants $C_{\pm} = 1 \pm \frac{1}{\sqrt{2}}$ and for all $x \in \RR^2$,
    \[
        C_- \left[\frac{\Phi''(\alpha)}{\beta}|x_1|^2 + 
         \frac{2\Phi(\alpha)}{\beta^3}|x_2|^2 \right] \leq \Sigma_\lambda(x,x) \leq 
        C_+ \left[\frac{\Phi''(\alpha)}{\beta}|x_1|^2 + 
         \frac{2\Phi(\alpha)}{\beta^3}|x_2|^2 \right].
    \]
    In other words, the following matrix inequality holds for the L\"owner ordering $\loewleq$ (let us recall that two symmetric real matrices $A$ and $B$ satisfy $A \loewleq B$ if $B - A$ is positive semi-definite):
     \[
	C_- 	     	
     \begin{bmatrix}
         \dfrac{\Phi''(\alpha)}{\beta} & 0\\
         0 & \dfrac{2\Phi(\alpha)}{\beta^3}\\
     \end{bmatrix}
     \loewleq
     \Sigma_\lambda
     \loewleq C_+
     \begin{bmatrix}
         \dfrac{\Phi''(\alpha)}{\beta} & 0\\
         0 & \dfrac{2\Phi(\alpha)}{\beta^3}\\
     \end{bmatrix}.
     \]
     Considering the right-hand side of this inequality, and remembering that as a consequence of~\eqref{eq:limit-parameters}, we have $\Phi''(\alpha_n)/\beta_n \asymp n_1$ and $\Phi(\alpha_n)/\beta_n^3 \asymp (n_2)^2/n_1$, we see that the analogue of the bound \eqref{eq:bound_Gamma} for the quadratic form $\Sigma_{\lambda_n}$ holds. In order to complete the proof of the proposition, we need to control the error made when we replace $\Gamma_{\lambda_n}$ by $\Sigma_{\lambda_n}$. Using \eqref{eq:error_gamma}, for all $x \in \RR^2$,
    \[
        \left|\Gamma_{\lambda_n}(x,x) - \Sigma_{\lambda_n}(x,x)\right| \leq \|\Gamma_{\lambda_n} - \Sigma_{\lambda_n}\| \cdot \|x\|^2 \leq e^{-\alpha_n}C(\alpha_n)\|x\|^2.
    \]
    Under the conditions of Theorem~\ref{thm:main}, since $C(\alpha_n) \asymp 1$ and $e^{-\alpha_n} \asymp (n_1)^2/n_2$ which is negligible compared to both $n_1$ and $(n_2)^2/n_1$, we obtain
     \begin{equation}
         \label{eq:loewner_gamma}
         \frac{C_-}{2} 	     	
     \begin{bmatrix}
         n_1 & 0\\
         0 & \dfrac{(n_2)^2}{n_1}\\
     \end{bmatrix}
     \loewleq
     \Gamma_{\lambda_n}
     \loewleq 2C_+
     \begin{bmatrix}
         n_1 & 0\\
         0 & \dfrac{(n_2)^2}{n_1}\\
     \end{bmatrix},
 \end{equation}
     which in turn implies, using the decreasing property of matrix inversion with respect to L\"owner ordering,
     \begin{equation}
         \label{eq:loewner_gamma_inv}
	     \frac{1}{2C_+} 	     	
     \begin{bmatrix}
         \dfrac{1}{n_1} & 0\\
         0 & \dfrac{n_1}{(n_2)^2}\\
     \end{bmatrix}
     \loewleq \Gamma_{\lambda_n}^{-1}
     \loewleq \frac{2}{C_-}
     \begin{bmatrix}
         \dfrac{1}{n_1} & 0\\
         0 & \dfrac{n_1}{(n_2)^2}\\
     \end{bmatrix}.
     \end{equation}
     The right-hand sides of \eqref{eq:loewner_gamma} and \eqref{eq:loewner_gamma_inv} provide respectively \eqref{eq:bound_Gamma}
      and \eqref{eq:bound_Gamma_inv}.
\end{proof}

\begin{corollary}
    \label{cor:eigenvalue}
    Let $\sigma^2_n$ be the smallest eigenvalue of $\Gamma_{\lambda_n}$. It satisfies $\sigma_n^2 \asymp n_1$.
\end{corollary}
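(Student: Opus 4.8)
The plan is to read the estimate off directly from the two-sided Löwner bound~\eqref{eq:loewner_gamma} established inside the proof of Proposition~\ref{prop:bounds_Gamma}. Recall that the smallest eigenvalue of a symmetric matrix is given by the Rayleigh quotient $\lambda_{\min}(A) = \min_{x \neq 0} \Gamma_\lambda$-style, i.e. $\lambda_{\min}(A) = \min_{x\neq 0} \frac{\inp{x,Ax}}{\inp{x,x}}$, so that it is monotone for the Löwner ordering: $A \loewleq B$ implies $\lambda_{\min}(A) \leq \lambda_{\min}(B)$. Applying this to~\eqref{eq:loewner_gamma}, in which the lower and upper bounds are proportional to the \emph{same} diagonal matrix $D_n := \mathrm{diag}\!\bigl(n_1,\, (n_2)^2/n_1\bigr)$, the problem reduces to computing $\lambda_{\min}(D_n) = \min\!\bigl(n_1,\,(n_2)^2/n_1\bigr)$.

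Next I would observe that, under the standing hypotheses of Theorem~\ref{thm:main} (in particular $n_1 = \bigO(\sqrt{n_2})$ together with $n_1, n_2 \to \infty$), one has $n_1 \leq n_2$ for all large $k$: indeed $n_1 \leq C\sqrt{n_2}$ for some constant $C$, and $\sqrt{n_2} \geq C$ eventually, so $n_1 \leq C\sqrt{n_2} \leq n_2$. Consequently $(n_2)^2/n_1 \geq n_2 \geq n_1$, whence $\lambda_{\min}(D_n) = n_1$ for all large $k$. Combining this with the monotonicity from the first step and the explicit constants in~\eqref{eq:loewner_gamma} yields, for all large $k$,
\[
    \frac{C_-}{2}\, n_1 \;\leq\; \sigma_n^2 \;\leq\; 2C_+\, n_1,
\]
which is precisely the claim $\sigma_n^2 \asymp n_1$.

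There is essentially no obstacle here: the whole content of the statement is already packaged in the Löwner sandwich~\eqref{eq:loewner_gamma}. The only point deserving a moment of care is identifying which diagonal entry of $D_n$ is the smaller one — and that is settled by the elementary comparison $n_1 \leq n_2$, valid eventually under the assumptions. (If one preferred to avoid invoking eventual validity, the same argument gives the cruder two-sided bound $\frac{C_-}{2}\min(n_1,(n_2)^2/n_1) \leq \sigma_n^2 \leq 2C_+\min(n_1,(n_2)^2/n_1)$ for every $k$, which already suffices once the regime $n_1 = \bigO(\sqrt{n_2})$ is imposed.)
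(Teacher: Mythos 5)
Correct, and this is exactly the paper's argument: the corollary is read off from the Löwner sandwich~\eqref{eq:loewner_gamma} by the monotonicity of $\lambda_{\min}$ in the Löwner order. Your added observation that $n_1 \leq n_2$ eventually (so that $n_1$ is indeed the smaller diagonal entry) is the only detail the paper leaves implicit, and you supply it correctly.
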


\begin{proof}
    This is an immediate consequence of the inequalities~\eqref{eq:loewner_gamma}.
\end{proof}

\subsection{The condition on the Lyapunov ratio}
\label{subsec:lyapunov}
We now check the second assumption of Proposition~\ref{prop:framework} below.
Let $\Gamma_\lambda^{1/2}$ be the uniquely defined symmetric positive-definite square root of $\Gamma_\lambda$. We introduce the following analogue of the scale-independent Lyapunov ratio \citep[p. 59]{bhattacharya_normal_2010}:
\[
    L_\lambda := \sup_{t \in \RR^d} \frac{1}{{\|\Gamma_\lambda^{1/2}t\|}^3}\sum_{x\in X} \EE_\lambda\left|\big\langle t,[\omega(x) - \EE_\lambda \omega(x)]x\big\rangle\right|^3.
\]
\begin{proposition}
    \label{prop:lyapunov-condition}
    Under the assumptions of Theorem~\ref{thm:main}, $L_{\lambda_n} = \bigO\left(\dfrac{1}{\sqrt{n_1}}\right)$.
\end{proposition}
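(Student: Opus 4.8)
The plan is to bound the numerator sum and the denominator $\|\Gamma_{\lambda_n}^{1/2}t\|^3$ separately, uniformly in $t \in \RR^2$. Write $\xi_x := \omega(x) - \EE_\lambda\omega(x)$, so that each $\omega(x)$ is geometric with parameter $1 - e^{-\sca{\lambda,x}}$ and the $\omega(x)$ are independent. For the denominator, Proposition~\ref{prop:bounds_Gamma} (or more precisely the lower Löwner bound \eqref{eq:loewner_gamma}) immediately gives $\|\Gamma_{\lambda_n}^{1/2}t\|^2 = \Gamma_{\lambda_n}(t,t) \gtrsim n_1|t_1|^2 + \frac{(n_2)^2}{n_1}|t_2|^2$. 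For the numerator, since $\sca{t,\xi_x x} = \xi_x\sca{t,x}$ with $\sca{t,x} = t_1 x_1 + t_2 x_2$, I would factor each summand as $|\sca{t,x}|^3\,\EE_\lambda|\xi_x|^3$ and estimate the third absolute central moment of a geometric variable: if $\omega(x)$ has mean $\mu_x = e^{-\sca{\lambda,x}}/(1-e^{-\sca{\lambda,x}})$, then $\EE_\lambda|\xi_x|^3 = O(\mu_x + \mu_x^3)$, and in the relevant range $\sca{\lambda,x} \geq \alpha_n \gtrsim 1$, so $\mu_x \asymp e^{-\sca{\lambda,x}}$ is small and $\EE_\lambda|\xi_x|^3 = O(e^{-\sca{\lambda,x}})$. (This last estimate is essentially the same computation that underlies the bound $\|\Gamma_{\lambda_n}-\Sigma_{\lambda_n}\| \leq e^{-\alpha_n}C(\alpha_n)$ of \eqref{eq:error_gamma}.)

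Next I would expand $|\sca{t,x}|^3 \leq 4(|t_1 x_1|^3 + |t_2 x_2|^3)$ (using $(a+b)^3 \leq 4(a^3+b^3)$ for $a,b \geq 0$) and sum over $x = (x_1,x_2) \in \N^2$, splitting the sum into the two resulting pieces:
\[
    \sum_{x \in \N^2} x_1^3\, e^{-\alpha_n x_1 - \beta_n x_2}
    \quad\text{and}\quad
    \sum_{x \in \N^2} x_2^3\, e^{-\alpha_n x_1 - \beta_n x_2}.
\]
Each factorizes as a product of one-dimensional sums. Using $\sum_{k\geq 1} k^j e^{-\alpha k} \asymp e^{-\alpha}$ for fixed $j$ as $\alpha$ stays bounded away from $0$, while $\sum_{k\geq 1} k^j e^{-\beta k} \asymp \beta^{-j-1}$ as $\beta \to 0^+$, the first sum is $\asymp e^{-\alpha_n}\beta_n^{-1}$ and the second is $\asymp e^{-\alpha_n}\beta_n^{-4}$. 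By the calibration estimates~\eqref{eq:limit-parameters}, $e^{-\alpha_n}\beta_n^{-1} \asymp n_1$ and $e^{-\alpha_n}\beta_n^{-4} \asymp (n_2)^2/(n_1)^2 \cdot n_2 \cdot \ldots$; more carefully, $e^{-\alpha_n}/\beta_n^2 \asymp n_2$ and $\beta_n \asymp n_1/n_2$ give $e^{-\alpha_n}/\beta_n^4 = (e^{-\alpha_n}/\beta_n^2)\cdot\beta_n^{-2} \asymp n_2 \cdot (n_2/n_1)^2 = (n_2)^3/(n_1)^2$. Combining, the numerator of $L_{\lambda_n}$ is $O\big(n_1|t_1|^3 + \frac{(n_2)^3}{(n_1)^2}|t_2|^3\big)$.

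Finally I would divide. Using the crude bound $|t_1|^3 \leq \|t\|_{\Gamma}^3 / (\text{its coefficient})^{3/2}$ coordinatewise — more precisely, from $\Gamma_{\lambda_n}(t,t) \gtrsim n_1|t_1|^2$ we get $n_1|t_1|^3 = n_1|t_1|^2 \cdot |t_1| \lesssim \Gamma_{\lambda_n}(t,t)\cdot \Gamma_{\lambda_n}(t,t)^{1/2} n_1^{-1/2} = n_1^{-1/2}\|\Gamma_{\lambda_n}^{1/2}t\|^3$, and similarly from $\Gamma_{\lambda_n}(t,t) \gtrsim \frac{(n_2)^2}{n_1}|t_2|^2$ we get $\frac{(n_2)^3}{(n_1)^2}|t_2|^3 = \frac{(n_2)^2}{n_1}|t_2|^2 \cdot \frac{n_2}{n_1}|t_2| \lesssim \|\Gamma_{\lambda_n}^{1/2}t\|^2 \cdot \frac{n_2}{n_1}|t_2|$, and $\frac{n_2}{n_1}|t_2| \lesssim \frac{n_2}{n_1}\cdot \big(\frac{n_1}{(n_2)^2}\big)^{1/2}\|\Gamma_{\lambda_n}^{1/2}t\| = n_1^{-1/2}\|\Gamma_{\lambda_n}^{1/2}t\|$. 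Both terms are therefore $O(n_1^{-1/2}\|\Gamma_{\lambda_n}^{1/2}t\|^3)$, so the ratio is $O(1/\sqrt{n_1})$ uniformly in $t$, which is the claim.

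The routine part is the geometric-moment estimate and the one-dimensional sum asymptotics; the only point demanding a little care is keeping the anisotropic bookkeeping straight — matching each power of $t_1$, $t_2$ against the correct eigendirection of $\Gamma_{\lambda_n}$ — and checking that the error between $\Gamma_{\lambda_n}$ and $\Sigma_{\lambda_n}$ (already controlled in \eqref{eq:loewner_gamma}) does not spoil the lower bounds used in the denominator. This is exactly where the strong anisotropy of the problem shows up, but \eqref{eq:loewner_gamma} has already absorbed it, so no new obstacle arises.
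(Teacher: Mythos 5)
Your proof is correct and follows essentially the same route as the paper: the same reduction to the two anisotropic sums $\sum_x x_1^3\,\EE_\lambda|\overline\omega(x)|^3 \asymp n_1$ and $\sum_x x_2^3\,\EE_\lambda|\overline\omega(x)|^3 \asymp n_2^3/n_1^2$ via the geometric third-moment bound, combined with the covariance estimates of Proposition~\ref{prop:bounds_Gamma}. The only difference is bookkeeping: the paper factors out $\|\Gamma_\lambda^{1/2}t\|^3$ at the outset by writing $\langle t,\overline\omega(x)x\rangle = \langle\Gamma_\lambda^{1/2}t,\overline\omega(x)\Gamma_\lambda^{-1/2}x\rangle$, applying Cauchy--Schwarz, and then distributing the $3/2$-power over $\Gamma_{\lambda_n}^{-1}(x,x)$ via Jensen, whereas you keep $t$ explicit through $(a+b)^3\leq 4(a^3+b^3)$ and divide by the lower L\"owner bound on $\Gamma_{\lambda_n}(t,t)$ at the end; both work.
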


\begin{proof}
For all $x \in \X$, let $\ovomega(x) := \omega(x) - \EE_\lambda\omega(x)$. Using the fact that $\Gamma_\lambda^{1/2}$ is symmetric and the Cauchy-Schwarz inequality, notice that we have for all $t\in \RR^2$,
\begin{align*}
    \sum_{x\in \X} \EE_\lambda\left|\inp{t,\ovomega(x)\cdot x}\right|^3 &= \sum_{x\in \X}
    \EE_\lambda\left|\inp{\Gamma_\lambda^{1/2}t,\ovomega(x)\cdot \Gamma_\lambda^{-1/2}x}\right|^3\\
    & \leq \|\Gamma_\lambda^{1/2}t\|^3\sum_{x\in \X}\|\Gamma_\lambda^{-1/2}x\|^3 \EE_\lambda|\ovomega(x)|^3.
\end{align*}
The bound~\eqref{eq:bound_Gamma_inv} of Proposition~\ref{prop:bounds_Gamma} and Jensen's inequality for the convex function $u \mapsto u^{3/2}$ entail the existence of some constant $C > 0$ such that for all $x \in \RR^2$,
\[
    \|\Gamma_{\lambda_n}^{-1/2}x\|^3 = \Gamma_{\lambda_n}^{-1}(x,x)^{3/2} \leq C\left[  \left(\frac{1}{n_1}\right)^{3/2}|x_1|^3 + \left(\frac{n_1}{(n_2)^2}\right)^{3/2}|x_2|^3\right].
\]  
Considering these two facts, we see that it will be enough to show the existence of two positive constants $C_1$ and $C_2$ such that
    \begin{equation}
        \label{eq:lyapunov_split}
        \sum_{x\in \X} |x_1|^3 \EE_\lambda(|\ovomega(x)|^3) \leq C_1 n_1\qquad\text{and}\qquad 
        \sum_{x\in \X} |x_2|^3 \EE_\lambda(|\ovomega(x)|^3) \leq C_2 \frac{(n_2)^3}{(n_1)^2}.
    \end{equation}
    In order to bound the third absolute moment $\EE_\lambda(|\ovomega(x)|^3)$, we first compute the fourth moment
    \[
        \EE_\lambda(|\ovomega(x)|^4) =
        \frac{e^{-\sca{\lambda,x}}(1+7e^{-\sca{\lambda,x}}+e^{-2\sca{\lambda,x}})}{(1-e^{-\sca{\lambda,x}})^4}
        \leq \frac{9e^{-\sca{\lambda,x}}}{(1-e^{-\sca{\lambda,x}})^4}.
    \]
    Reminding that $\EE_\lambda(|\ovomega(x)|^2) = e^{-\sca{\lambda,x}}/(1-e^{-\sca{\lambda,x}})^2$, and using the Cauchy-Schwarz inequality,
    \[
    \EE_\lambda(|\ovomega(x)|^3) \leq \sqrt{ \EE_\lambda(|\ovomega(x)|^2) \EE_\lambda(|\ovomega(x)|^4)} \leq \frac{3e^{-\sca{\lambda,x}}}{(1-e^{-\sca{\lambda,x}})^3}.
\]
For the first bound of~\eqref{eq:lyapunov_split}, we can thus write
    \[
        \sum_{x\in \X} |x_1|^3 \EE_\lambda(|\ovomega(x)|^3) \leq 3\sum_{x\in
            \X} \frac{|x_1|^3
            e^{-\sca{\lambda,x}}}{(1-e^{-\sca{\lambda,x}})^3} \leq \frac{3}{(1-e^{-\alpha})^3} \sum_{x_1\geq 1} |x_1|^3 e^{-\alpha x_1}\sum_{x_2 \geq 1} e^{-\beta x_2}.
    \]
    By \eqref{eq:limit-parameters}, we have $(1-e^{-\alpha_n}) \asymp 1$ and
    \[
        \sum_{x_1 \geq 1} |x_1|^3e^{-\alpha_n x_1} \asymp e^{-\alpha_n} \asymp \frac{(n_1)^2}{n_2},\qquad \sum_{x_2 \geq 1} e^{-\beta_n x_2} \asymp \frac{1}{\beta_n} \asymp \frac{n_2}{n_1}.
    \]
    Therefore, the first part of \eqref{eq:lyapunov_split} follows for some positive constant $C_1$. The second part is obtained similarly from
    \[
        \sum_{x_1 \geq 1} e^{-\alpha_n x_1} \asymp e^{-\alpha_n} \asymp \frac{(n_1)^2}{n_2} \quad\text{and}\quad \sum_{x_2 \geq 1}|x_2|^3 \frac{e^{-\beta_n x_2}}{(1-e^{-\beta_n x_2})^3} \asymp \frac{1}{\beta_n^4} \asymp \left(\frac{n_2}{n_1}\right)^4.\qedhere
    \]
\end{proof}

\subsection{The decrease condition on the characteristic function}

We finally check that the last condition of Proposition \ref{prop:framework} is satisfied. Consider the ellipse $\Ecal_\lambda$ defined by
\[
    \Ecal_\lambda := \left\{t \in \RR^2: \|\Gamma_\lambda^{1/2} t\| \leq \dfrac{1}{4L_\lambda}\right\} = \Gamma_\lambda^{-1/2}\left(\left\{u \in \RR^2 : \|u\| \leq \frac{1}{4L_\lambda}\right\}\right),
\]
where $L_\lambda$ is the Lyapunov ratio previously defined in Subsection~\ref{subsec:lyapunov}.

\begin{proposition}
    \label{prop:cramer-condition}
    Under the assumptions of Theorem~\ref{thm:main},
    \[
        \sup_{t \in [-\pi,\pi]^2 \setminus \Ecal_{\lambda_n}} \left| \EE_{\lambda_n}(e^{i\sca{t, N}})\right| = \bigO\left(\frac{1}{n_2\sqrt{n_1}}\right).
    \]

\end{proposition}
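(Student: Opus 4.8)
The plan is to exploit the product structure of the characteristic function. Since the $\omega(x)$ are independent under $\PP_{\lambda_n}$ and each is geometric, we have the exact formula
\[
    \left|\EE_{\lambda_n}(e^{i\sca{t,N}})\right|^2 = \prod_{x\in\X} \frac{(1-e^{-\sca{\lambda_n,x}})^2}{(1-e^{-\sca{\lambda_n,x}})^2 + 2e^{-\sca{\lambda_n,x}}(1-\cos\sca{t,x})}.
\]
Taking logarithms, $-\log\left|\EE_{\lambda_n}(e^{i\sca{t,N}})\right|^2 = \sum_{x\in\X}\log\bigl(1 + u_x(1-\cos\sca{t,x})\bigr)$ where $u_x := 2e^{-\sca{\lambda_n,x}}/(1-e^{-\sca{\lambda_n,x}})^2 \geq 0$. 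Using $\log(1+v)\geq \frac{v}{1+v}$ and the fact that $u_x \leq u_{(1,1)} =: U$ is uniformly bounded by \eqref{eq:limit-parameters}, it suffices to produce a lower bound of order $\log(n_2\sqrt{n_1})$ for $\sum_{x\in\X} u_x(1-\cos\sca{t,x})$, uniformly over $t \in [-\pi,\pi]^2\setminus\Ecal_{\lambda_n}$. Restricting the sum to the sublattice $\{(k,1):k\geq 1\}$ and to $\{(1,\ell):\ell\geq 1\}$ gives two one-dimensional sums of the type that appear in the Hardy–Ramanujan circle method; the quantity $1-\cos\theta \asymp \|\theta\|_{\RR/2\pi\Z}^2$ for $\theta$ near $0$.

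The key steps, in order. First, I would establish the elementary reduction above, turning the estimate into a lower bound on $S(t) := \sum_{x\in\X} u_x(1-\cos\sca{t,x})$. Second, I would handle the region where $\|t\|$ is small but $t\notin\Ecal_{\lambda_n}$: there $S(t)$ is comparable to the quadratic form, namely $S(t)\asymp \Gamma_{\lambda_n}(t,t)$ (since $1-\cos v \asymp v^2$ uniformly for $|v|$ bounded, and the terms with $|\sca{t,x}|$ not small only help), and $t\notin\Ecal_{\lambda_n}$ means precisely $\Gamma_{\lambda_n}(t,t) > (4L_{\lambda_n})^{-2}$, which by Proposition~\ref{prop:lyapunov-condition} is of order $n_1$; this gives $S(t)\gtrsim n_1 \gg \log(n_2\sqrt{n_1})$, so $\left|\EE_{\lambda_n}(e^{i\sca{t,N}})\right|$ is exponentially small — far better than needed. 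Third, and this is the genuine work, I would treat $t$ with at least one coordinate, say $t_1$, bounded away from $0$ in $\RR/2\pi\Z$ (the case $|t_2|$ bounded away from $0$ is symmetric and easier because the $x_2$-marginal has far more mass). Here I use the sum over $x=(k,1)$, $k\geq1$: $\sum_{k\geq 1} u_{(k,1)}(1-\cos(kt_1)) \gtrsim e^{-\beta_n}\sum_{k\geq1} e^{-\alpha_n k}(1-\cos(kt_1))$. The inner sum is a standard quantity: for $t_1$ in a compact subinterval of $(0,2\pi)$ it is bounded below by a constant times $\sum_{k\geq1}e^{-\alpha_n k}$, but for $t_1$ near $0$ — which can happen while $t_2$ is away from $0$, or when $\|t\|$ is of intermediate size — one instead gets a bound of order $e^{-\alpha_n}\cdot t_1^2/\alpha_n^{?}$; combining the $x_1$-direction and $x_2$-direction contributions and using $e^{-\beta_n}\asymp 1$, $e^{-\alpha_n}\asymp n_1^2/n_2$, $\beta_n^{-1}\asymp n_2/n_1$, one extracts $S(t) \gtrsim \min\{n_1,\ \text{const}\}$ — in fact a positive constant bounded away from $0$ uniformly over this region. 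Since a uniform positive constant lower bound for $-\log|\EE|^2$ only yields $|\EE| = \bigO(1)$, I would need the sharper bound: summing more carefully over the full lattice and tracking the $\asymp n_2/n_1$ terms from the $x_2$-sum, $S(t)\gtrsim (n_2/n_1)$ at least, which combined with the $x_1$-contribution of order $\asymp n_1$ in the complementary sub-region, yields $-\log|\EE|^2 \gtrsim \log(n_2\sqrt{n_1})$ after noting $n_2/n_1 \cdot n_1 = n_2$ and $n_1 \geq (\log n_2)^{1+o(1)}$ from the hypothesis $\log n_2 = o(n_1)$.

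The main obstacle is the third step: the strong anisotropy (noted in the introduction) means the "major arc" near $t=0$ is a thin ellipse, and outside it one must still beat a logarithm, which is much weaker than the usual exponential Cramér-type decay one gets for a single well-conditioned summand but requires genuinely combining the contributions of lattice points in both coordinate directions and using the lower bound $\log n_2 = o(n_1)$ to convert "$S(t)\gtrsim$ something polynomial in $n_1,n_2$" into "$\gg \log(n_2\sqrt{n_1})$". Concretely, the delicate book-keeping is to choose, for each $t\in[-\pi,\pi]^2\setminus\Ecal_{\lambda_n}$, whether to lower-bound $S(t)$ by its $x_1$-marginal contribution (good when $t_1$ is not too small) or its $x_2$-marginal contribution (good when $t_2$ is not too small) or by the quadratic form (good when both are small but $t\notin\Ecal_{\lambda_n}$), and to verify the three regimes cover the domain with overlapping, quantitatively matching bounds. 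I expect the cleanest route is: on $\{t: \Gamma_{\lambda_n}(t,t)\geq n_2\}$ use step two's quadratic estimate directly; on the complement within $[-\pi,\pi]^2\setminus\Ecal_{\lambda_n}$, argue that $t$ cannot be too close to the origin, so one coordinate is $\gtrsim$ (something like) $n_1/n_2$ in absolute value, feed this into the appropriate marginal sum, and conclude.
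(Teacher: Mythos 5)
Your overall framing matches the paper's: express $-\log\left|\EE_{\lambda_n}(e^{i\sca{t,N}})\right|$ via the product structure and reduce to a lower bound on the weighted trigonometric sum $S(t)=\sum_{x\in\X} u_x(1-\cos\sca{t,x})$, where $u_x\asymp e^{-\sca{\lambda_n,x}}$. Your inequality $\log(1+v)\geq v/(1+v)$ together with the uniform bound on $u_x$ plays exactly the role of the paper's elementary estimate $\bigl|\tfrac{1-|z|}{1-z}\bigr|\leq e^{\Re z-|z|}$; both reduce the proposition to showing $S(t)\gtrsim\log(n_2\sqrt{n_1})$ uniformly on $[-\pi,\pi]^2\setminus\Ecal_{\lambda_n}$. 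Your second step, lower-bounding $S(t)$ by the quadratic form and invoking $\Gamma_{\lambda_n}(t,t)>(4L_{\lambda_n})^{-2}\gtrsim n_1$ for $t\notin\Ecal_{\lambda_n}$, is sound in spirit; however the domination $S(t)\gtrsim\Gamma_{\lambda_n}(t,t)$ requires both $|t_1|\ll\alpha_n$ and $|t_2|\ll\beta_n$, and your proposed cutoff at $\Gamma_{\lambda_n}(t,t)=n_2$ lands outside that regime (it forces $|t_2|\gtrsim\sqrt{n_1/n_2}\gg\beta_n$), so that split needs revision.

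The genuine gap is in your third step, exactly where you flag it. Your marginal computation from the sublattice $\{(k,1):k\geq1\}$ correctly yields $S(t)\gtrsim e^{-\alpha_n}\asymp n_1^2/n_2=O(1)$ when $|t_1|\geq c$, and you rightly note that a constant lower bound is useless here. But the announced fix, ``summing more carefully over the full lattice \ldots\ $S(t)\gtrsim n_2/n_1$,'' is never derived, and the supporting arithmetic (``$n_2/n_1\cdot n_1=n_2$'') does not correspond to any step one could actually perform. The natural way to extract the full weight $\asymp n_1$ is to sum over $\{(1,\ell):\ell\geq1\}$, but then one must control $\sum_{\ell\geq1}e^{-\beta_n\ell}\bigl(1-\cos(t_1+\ell t_2)\bigr)$ \emph{uniformly} in $t_2\in[-\pi,\pi]$, and the dominant terms may vanish when $t_1+\ell t_2\equiv0\pmod{2\pi}$. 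Making this work requires additional machinery --- for instance the trigonometric inequality $(1-\cos a)+(1-\cos b)\geq\tfrac14\bigl(1-\cos(a-b)\bigr)$ applied to consecutive $\ell$, together with a case split on $|t_2|$ against $\beta_n$ --- none of which is supplied.

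The paper sidesteps this lattice-sum combinatorics entirely. After reaching $|\varphi_\lambda(t)|\leq\exp\bigl\{\Re\sum_x e^{-\sca{\lambda-it,x}}-\sum_x e^{-\sca{\lambda,x}}\bigr\}$, it applies $\Re z\leq|z|$ to the \emph{factored} double geometric series, yielding the decoupled bound
\[
|\varphi_\lambda(t)|\leq\exp\left\{\frac{1}{|e^{\alpha}-e^{it_1}|\,|e^{\beta}-e^{it_2}|}-\frac{1}{(e^{\alpha}-1)(e^{\beta}-1)}\right\}.
\]
Because $t_1$ and $t_2$ now enter through a product, fixing $|t_1|\geq c$ produces a strict loss in the first factor while the second factor contributes its entire mass $1/(e^{\beta_n}-1)\asymp n_2/n_1$, giving exponential decay $\exp\{-cn_1\}$, which dominates $1/(n_2\sqrt{n_1})$ by the hypothesis $\log n_2=o(n_1)$; the region $\{|t_2|\geq cn_1/n_2\}$ is handled symmetrically. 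This decoupling --- replacing the two-dimensional cosine sum by a product of two one-dimensional ones --- is the crucial missing idea in your sketch.
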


\begin{proof}
    Let us write $\varphi_\lambda(t) = \EE_\lambda(e^{i\sca{t,N}})$ for $t\in \RR^2$, the characteristic function of $N$. Observe that the following elementary inequality holds for all complex number $z$ with modulus $|z| < 1$:
    \begin{equation}
        \label{eq:elem_ineq}
        \left|\frac{1-|z|}{1-z}\right| \leq \exp\left\{\Re(z) - |z|\right\}.
    \end{equation}
    Applying \eqref{eq:elem_ineq} with $z = e^{-\inp{\lambda-it,x}}$ for all $x \in \X$, we obtain
    \[
        |\varphi_\lambda(t)| = \prod_{x\in \X} \left|\frac{1-e^{-\langle
                \lambda,x\rangle}}{1-e^{-\langle \lambda -
                it,x\rangle}}\right| \leq \exp\left\{\Re\left(\sum_{x\in\X} e^{-\inp{\lambda-it,x}}\right)-\sum_{x\in \X} e^{-\inp{\lambda,x}}\right\}.
    \]
    Since $\Re(z) \leq |z|$ for all complex number $z$, we deduce that
    \begin{equation}
        \label{eq:varphi_bound}
        |\varphi_\lambda(t)| \leq \exp\left\{\frac{1}{|e^\alpha-e^{it_1}|}\frac{1}{|e^\beta-e^{it_2}|}- \frac{1}{e^\alpha - 1}\frac{1}{e^\beta- 1}\right\}.
    \end{equation}
    Let us now describe the set $[-\pi,\pi]^2\setminus\Ecal_n$.
    By the inequality~\eqref{eq:bound_Gamma} of Proposition~\ref{prop:bounds_Gamma}, we know that there exists a positive constant $C$ such that for all $t \in \RR^2$,
    \begin{equation*}
    \|\Gamma_{\lambda_n}^{1/2} t\| = \sqrt{\Gamma_{\lambda_n}(t,t)} \leq C\max\left\{\sqrt{n_1}|t_1|,\frac{n_2}{\sqrt{n_1}} |t_2|\right\}
    \end{equation*}
    Since $L_\lambda = \bigO(n_1^{-1/2})$, we can find some constant
    $c > 0$ such that for all $t \in \RR^2$, the condition $t \notin \Ecal_n$ implies
    \begin{equation}
        |t_1| \geq c \qquad\text{or}\qquad |t_2| \geq c \, \frac{n_1}{n_2}.
    \end{equation}
    In particular, it is enough to bound $|\varphi_\lambda(t)|$ on $\{c \leq
        |t_1| \leq \pi\}$ and $\{c\frac{n_1}{n_2} \leq |t_2| \leq \pi\}$. Without
    loss of generality, we can assume that $c < \pi$. Let us begin with the case $c \leq |t_1| \leq c$. It is easy to check that
     $|e^\alpha-e^{it_1}| \geq |e^\alpha-e^{ic}| \geq
        (e^\alpha-1)$ and $|e^\beta-e^{it_2}| \geq (e^\beta-1)$. Also,
    \[
        \frac{1}{e^{\beta_n}-1} \asymp \frac{n_2}{n_1} \qquad\text{and}\qquad \frac{1}{e^{\alpha_n}-1}-\frac{1}{|e^{\alpha_n}-e^{ic}|} \asymp \frac{(n_1)^2}{n_2}.
    \]
    Hence there exists $C_1 > 0$ such that $|\varphi_\lambda(t)| \leq \exp\{-C_1 n_1\}$ uniformly on $\{c < |t_1| \leq \pi\}$. 
    We use the same method to bound $\varphi_\lambda$ in the domain $\{c\frac{n_1}{n_2} \leq |t_2| \leq \pi\}$, starting with the inequalities $|e^\alpha - e^{it_1}| \geq (e^\alpha-1)$, $|e^\beta-e^{it_2}| \geq |e^\beta-e^{ic\beta}|$, and the estimates
    \[
        \frac{1}{e^{\alpha_n}-1} \asymp \frac{(n_1)^2}{n_2} \qquad \text{and} \qquad \frac{1}{e^{\beta_n}-1} - \frac{1}{|e^{\beta_n}-e^{ic \beta_n}|} \asymp \frac{n_2}{n_1}.
    \]
    Thus there exists $C_2 > 0$ such that $|\varphi_\lambda(t)| \leq \exp\{-C_2 n_1\}$ uniformly on $\{c\frac{n_1}{n_2} \leq |t_2| \leq \pi\}$.

    Therefore, the existence of a positive constant $C$ such that $|\varphi_n(t)| \leq e^{-C n_1}$ for all $t \in [-\pi,\pi]^2\setminus\Ecal_n$ follows. This implies the announced result because $n_1 \to +\infty$ and $\log n_2 = o(n_1)$ under the assumptions of Theorem~\ref{thm:main}.
\end{proof}

\subsection{Proof of the main theorem}

We give a proof of Theorem~\ref{thm:main} in the case $X = \N^2$ of partitions whose parts have non-zero components. Let $(n(k))_k$ be a sequence of vectors in $\N^2$ satisfying the assumptions of the theorem and
consider the sequence of parameters $\lambda_k = (\alpha_k,\beta_k)$, taken as the unique solutions of the implicit equations \eqref{eq:parameters}. Propositions \ref{prop:determinant}, \ref{cor:eigenvalue}, \ref{prop:lyapunov-condition} and \ref{prop:cramer-condition} show that, for the rate
\[
    a_k := \frac{1}{n_2(k)\sqrt{n_1(k)}},
\]
all the assumptions of Proposition~\ref{prop:framework} are satisfied. Therefore, there is a local limit theorem of rate $a_k$ for the random variable $N$ under $\PP_{\lambda_k}$. In particular,
\[
    \PP_{\lambda_k}(N  = n(k)) = \frac{1}{2\pi\sqrt{\det \Gamma_{\lambda_k}}}\exp\left\{-\frac{1}{2}\left\|\Gamma_{\lambda_k}^{-\frac{1}{2}}(n(k) - \EE_{\lambda_k}N)\right\|^2\right\} + \bigO(a_k).
\]
Remember that we chose the parameters in Section~\ref{sec:calibration} to ensure $\|n(k) - \EE_{\lambda_k}N\| = \bigO((n_1)^2/n_2)$. By the bound \eqref{eq:bound_Gamma_inv} of Proposition~\eqref{prop:bounds_Gamma} and the Cauchy-Schwarz inequality, we see therefore that $\|\Gamma_{\lambda_k}^{-1/2}(n(k) - \EE_{\lambda_k}N)\|$ tends to $0$. We then deduce from Proposition~\ref{prop:determinant} and $\sqrt{n_1} \to +\infty$ that
\[
    \PP_{\lambda_k}(N=n(k)) \sim \frac{1}{2\pi}\frac{\beta_k^2}{\sqrt{\Delta(\alpha_k)}},
\]
which, together with the equality~\eqref{eq:link} and the estimate $\log Z_{\lambda_k} = \frac{\Phi(\alpha_k)}{\beta_k} - \frac{\Psi(\alpha_k)}{2} + o(1)$ following from Proposition~\ref{prop:approximation}, implies
\[
    p_{\N^2}(n(k)) \sim \frac{\beta_k^2}{2\pi} \frac{e^{-\frac{1}{2}\Psi(\alpha_k)}}{\sqrt{\Delta(\alpha_k)}}\exp\left\{\alpha_k n_1(k) + \beta_kn_2(k) + \frac{\Phi(\alpha_k)}{\beta_k}\right\}.
\]
We finally use the implicit equations~\eqref{eq:parameters} to simplify this expression.

 \section{A framework for local limit theorems}

 The aim of this section is to provide a general framework as well as mild conditions under which local limit theorems hold for sums of independent random lattice vectors. We focus on Berry-Esseen-like estimates where existence of third moments is assumed and rates of convergence are established. The conditions need to be flexible enough to handle the strong anisotropy that occurs in our problem. Note that this framework also works in the settings of \citet{baez-duarte_hardy-ramanujan_1997,sinai_probabilistic_1994,bogachev_universality_2011}. 
 
 Let $J$ be some countable set. Let $\{\xi_j\}_{j\in J}$ be the canonical process on $(\Z^d)^J$ and consider a sequence of product probability measures $(\PP_k)$ on the product space $(\Z^d)^J$ such that
\[
    \sup_k \sum_{j\in J} \EE_k\|\xi_j\|^2 < \infty.
\]
This condition implies that for all $k$, the series $\sum_j \xi_j$ converges
$\PP_k$-almost surely to a random vector $S$. Moreover, the random vector $S$ has a finite expectation $m_k = \EE_k S$ as well as a finite covariance matrix
$\Gamma_k = \EE_k [(S - m_k)(S-m_k)^\top]$. Let $\sigma_k^2$ be the smallest eigenvalue of $\Gamma_k$. We make the
assumption that $\Gamma_k$ is \emph{non degenerate} (at least for every $k$
large enough), which is equivalent to $\sigma_k > 0$ so that it has a unique symmetric positive-definite square root
$\Gamma_k^{1/2}$, and we write $\Gamma_k^{-1/2}$ for its inverse. Let $g_d(x) = (2\pi)^{-\frac{d}{2}} e^{-\frac{1}{2}\|x\|^2}$ denote the density of the standard normal distribution in $\RR^d$.
 
\begin{definition}
Let $(a_k)$ be a sequence of positive numbers tending to $0$. The sequence $(\PP_k)$ satisfies a (Gaussian) \emph{local limit theorem with rate} $a_k$ if
     \[
         \limsup_{k\to+\infty} \sup_{n\in \Z^d}\; \frac{1}{a_k}\left|\PP_k(S = n) - \frac{g_d(\Gamma_k^{-\frac{1}{2}}(n - m_k))}{\sqrt{\det \Gamma_k}}\right| < \infty.
     \]
\end{definition}

We will give simple sufficient conditions for a local limit theorem to hold when the existence of third moments is assumed, that is
\[
    \sup_{k} \sum_{j\in J} \EE_k\|\xi_j\|^3 < \infty.
\]
Under this assumption we associate to each measure $\PP_k$ a scale-independent quantity $L_k$ analogous to the Lyapunov ratio \citep[p. 59]{bhattacharya_normal_2010}:
\[
    L_k := \sup_{t \in \RR^d\setminus\{0\}} \frac{1}{{\|\Gamma_k^{1/2}t\|}^{3}}\sum_{j\in J} \EE_k|\langle t,\xi_j - \EE_k \xi_j\rangle|^3.
\]
Finally, we consider the ellipsoid $\Ecal_k$ defined by
\[
    \Ecal_k := \left\{t \in \RR^d: \|\Gamma_k^{1/2} t\| \leq \dfrac{1}{4L_k}\right\} = \Gamma_k^{-1/2}\left(\left\{u \in \RR^d : \|u\| \leq \frac{1}{4L_k}\right\}\right).
\]

The following proposition gives three conditions on the product distributions $\PP_k$ that entail a local limit theorem with given speed of convergence. Notice that, at least in the one-dimensional \textit{i.i.d.} case, there is no loss in the rate of convergence (consider for example a sequence of independent Bernoulli variables with parameter $0 < p < \frac{1}{2}$).

 \begin{proposition}
     \label{prop:framework}
     Let $(a_k)$ be a sequence of positive numbers tending to $0$ such that
     \[
         \frac{1}{\sigma_k\sqrt{\det \Gamma_k}} = \bigO(a_k), \quad \frac{L_k}{\sqrt{\det \Gamma_k}} = \bigO(a_k),\quad
         \sup_{t \in [-\pi,\pi]^d\setminus \Ecal_k} \left|\EE_k (e^{i\sca{t,S}})\right| = \bigO(a_k).
        \]
     Then, the sequence $(\PP_k)$ satisfies a local limit theorem with rate $a_k$.
 \end{proposition}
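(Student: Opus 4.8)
The natural route is the classical Fourier-analytic proof of local limit theorems, adapted so that the constants are tracked in terms of $\Gamma_k$, $\sigma_k$ and $L_k$. By the Fourier inversion formula on the lattice $\Z^d$, for every $n \in \Z^d$ one has
\[
    \PP_k(S = n) = \frac{1}{(2\pi)^d}\int_{[-\pi,\pi]^d} e^{-i\sca{t,n}}\,\EE_k(e^{i\sca{t,S}})\,dt,
\]
while the Gaussian density admits the analogous representation $\dfrac{g_d(\Gamma_k^{-1/2}(n-m_k))}{\sqrt{\det\Gamma_k}} = \dfrac{1}{(2\pi)^d}\int_{\RR^d} e^{-i\sca{t,n}} e^{i\sca{t,m_k}} e^{-\frac12\inp{t,\Gamma_k t}}\,dt$. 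Subtracting, the difference to be bounded splits into three pieces, after the affine change of variable $t = \Gamma_k^{-1/2}u$ which turns the relevant ellipsoid $\Ecal_k$ into the Euclidean ball $B_k := \{\|u\|\le \tfrac{1}{4L_k}\}$: (a) the integral of $\EE_k(e^{i\sca{t,S}})$ over $[-\pi,\pi]^d\setminus\Ecal_k$, (b) the integral over $\Ecal_k$ of the difference between the true characteristic function and its Gaussian approximation, and (c) the tail integral of the Gaussian over $\RR^d\setminus\Ecal_k$.

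Piece (a) is handled directly by the third hypothesis: it is bounded by $(2\pi)^{-d}\,\mathrm{vol}([-\pi,\pi]^d)\cdot \bigO(a_k) = \bigO(a_k)$. Piece (c): on $\RR^d\setminus\Ecal_k$ the change of variables gives an integral of $e^{-\|u\|^2/2}$ over $\|u\|\ge \tfrac{1}{4L_k}$, divided by $\sqrt{\det\Gamma_k}$; the Gaussian tail decays faster than any power of $L_k$, so this contributes $o(L_k/\sqrt{\det\Gamma_k}) = \bigO(a_k)$, using the second hypothesis. The substance is piece (b): on $\Ecal_k$ one must compare $\EE_k(e^{i\sca{t,S}}) = \prod_{j\in J}\EE_k(e^{i\sca{t,\xi_j}})$ with $e^{i\sca{t,m_k}}e^{-\frac12\inp{t,\Gamma_k t}}$. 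Here the standard device is to take logarithms of each factor, expand $\log\EE_k(e^{i\sca{t,\xi_j-\EE_k\xi_j}})$ to second order with a third-order Taylor remainder controlled by $\EE_k|\sca{t,\xi_j-\EE_k\xi_j}|^3$, sum over $j\in J$, and observe that on $\Ecal_k$ the scale-invariant definition of $L_k$ guarantees that the total third-order error is $\le L_k\|\Gamma_k^{1/2}t\|^3 \le \tfrac14\|\Gamma_k^{1/2}t\|^2$, so the quadratic part dominates and the exponent stays in a regime where $|e^z - 1|\le |z|e^{|z|}$ type estimates apply. One then integrates the resulting bound $\bigO\!\big(L_k\|\Gamma_k^{1/2}t\|^3\big)e^{-c\inp{t,\Gamma_k t}}$ over $\Ecal_k$; after the substitution $t=\Gamma_k^{-1/2}u$ this becomes $\dfrac{1}{\sqrt{\det\Gamma_k}}\int_{\RR^d} \bigO(L_k\|u\|^3)e^{-c\|u\|^2}\,du = \bigO\!\big(L_k/\sqrt{\det\Gamma_k}\big) = \bigO(a_k)$.

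The main obstacle is the bookkeeping in piece (b): one must justify that on all of $\Ecal_k$ — not merely in a shrinking neighbourhood of the origin, as in the i.i.d. case — the factorwise Taylor expansion is valid and the remainder is uniformly controlled by $L_k$, which is exactly what the scale-independent normalization by $\|\Gamma_k^{1/2}t\|^3$ buys us; this is where the anisotropy is absorbed, since $L_k$ already encodes the worst direction. A secondary technical point is that the $\sigma_k$ appearing in the first hypothesis is needed to bound the term coming from $\|\Gamma_k^{1/2}t\|^2$ not exceeding its prescribed range on $\Ecal_k$ and to control the very small-$t$ contribution where the second-order term itself is $o(1)$; concretely $1/(4L_k)$ compared against the smallest semi-axis $\sigma_k$ of $\Gamma_k^{1/2}$ tells us whether $\Ecal_k$ is genuinely large, and the factor $1/(\sigma_k\sqrt{\det\Gamma_k})$ bounds the leftover. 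Assembling (a), (b), (c) gives the claimed rate $a_k$, and taking $\sup_{n\in\Z^d}$ is immediate since every bound above is uniform in $n$.
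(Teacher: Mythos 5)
Your high-level plan — Fourier inversion for both $\PP_k(S=n)$ and the Gaussian density, split into a far piece controlled by the third hypothesis, a central piece comparing characteristic functions on the ellipsoid, and a Gaussian tail — is the same as the paper's, and your central estimate (b) would go through, although the paper avoids taking logarithms of the factor characteristic functions: it uses the symmetrization trick $|\EE_k e^{i\inp{t,\overline{\xi}_j}}|^2 = \EE_k e^{i\inp{t,\xi_j-\xi_j'}}$ together with an elementary product inequality of Feller type, which sidesteps the usual worries about branches of $\log$ away from the origin.

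However, there is a genuine gap in your domain decomposition, and it is exactly where the first hypothesis $\sigma_k^{-1} = O(a_k\sqrt{\det\Gamma_k})$ is needed. The difference to be bounded is
\[
\frac{1}{(2\pi)^d}\left(\int_{T}\varphi_k(t)e^{-i\inp{t,n}}\,dt \;-\; \int_{\RR^d} e^{-i\inp{t,n-m_k}}e^{-\frac{1}{2}\inp{t,\Gamma_k t}}\,dt\right),\qquad T = [-\pi,\pi]^d.
\]
Your pieces (a)+(b)+(c) sum to $\int_{T\cup\Ecal_k}\varphi_k - \int_{\RR^d}\hat G$, not $\int_T\varphi_k - \int_{\RR^d}\hat G$, so the accounting only closes when $\Ecal_k \subset T$; there is no reason for this inclusion to hold (and in the paper's application the shortest semi-axis of $\Ecal_k$ is of constant order). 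The correct split is over $T\setminus\Ecal_k$, $T\cap\Ecal_k$, and $\RR^d\setminus(T\cap\Ecal_k)$, and the Gaussian tail further decomposes as $\RR^d\setminus(T\cap\Ecal_k) = (\RR^d\setminus\Ecal_k)\cup(\RR^d\setminus T)$. The first part is handled exactly as you say, via $L_k$. For the second part, after the substitution $u=\Gamma_k^{1/2}t$, the condition $t\notin T$ forces $\|t\|>\pi$, hence $\|u\|\geq\sigma_k\|t\|>\sigma_k\pi$, and the Gaussian tail beyond radius $\sigma_k\pi$ contributes $O(\sigma_k^{-1}/\sqrt{\det\Gamma_k}) = O(a_k)$. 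This is where the hypothesis on $\sigma_k$ actually bites; your paragraph attributing it to a bound on $\|\Gamma_k^{1/2}t\|^2$ near $t=0$, or to deciding "whether $\Ecal_k$ is genuinely large," does not correspond to a step in the argument and would not make the missing term vanish.
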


 \begin{proof}
We resort to Fourier analysis in order to bound the quantity
\[
    D_k = (2\pi)^d\,\sup_{n\in \Z^d} \left|\PP_k(S = n) - \frac{g_d(\Gamma_k^{-\frac{1}{2}}(n - m_k))}{\sqrt{\det \Gamma_k}}\right|.
\]
The strategy of the proof is to compare the distribution of normalized random vector $S$ under the measure $\PP_k$ with the normal distribution $\mathcal{N}(m_k,\Gamma_k)$. This is achieved by comparing their respective characteristic functions.
Let $\varphi_k$ be the characteristic function of $S$ under the measure $\PP_k$. By definition, we have for all $t \in \RR^d$, 
\[
    \varphi_k(t) = \EE_k[e^{i\langle t, S\rangle}] = \sum_{n \in \Z^d} \PP_k(S=n) \,e^{i\langle
        t,n\rangle}.
\]
The probabilities $\PP_k(S=n)$ for $n \in \Z^d$ thus appear as the Fourier coefficients of the periodic function $\varphi_k$. In particular, we have an inversion formula:
\[
    \forall n \in \Z^d,\qquad \PP_k(S=n) = \frac{1}{(2\pi)^d} \int_T \varphi_k(t) \, e^{-i\langle t,
    n\rangle}\,dt,
\]
the integral being taken over $T = [-\pi,\pi]^d$.

Now, consider the lattice random vector $Y_k = \Gamma_k^{-1/2}(S-m_k)$. It has zero mean and it is normalized so that its covariance matrix is the identity matrix. Let $\psi_k$ denote the characteristic function of $Y_k$. By definition, we have $\psi_k(t) = \EE_k(e^{i\langle t, Y_k\rangle})$ for all $t \in \RR^d$. Notice that the functions $\varphi_k$ and $\psi_k$ are linked together by the identity $\psi_k(\Gamma_k^{1/2}t) = \varphi_k(t) e^{-i\sca{t,m_k}}$. Hence for every $n\in \Z^d$, one has
\begin{equation}
    \label{eq:proba-fourier}
    \PP_k(S=n) = \frac{1}{(2\pi)^d}\int_T
    \psi_k(\Gamma_k^{1/2}t)\,e^{-i\langle t,n - m_k\rangle}\,dt.
\end{equation}

We turn to the second term in $D_k$, corresponding to the density of the normal distribution $\mathcal{N}(m_k,\Gamma_k)$. The Fourier inversion formula yields for all $n \in \Z^d$,
\begin{equation}
    \label{eq:density-fourier}
    \frac{g_d(\Gamma_{k}^{-1/2}(n
    - m_{k}))}{\sqrt{\det \Gamma_k}} = 
    \frac{1}{(2\pi)^d}\int_{\RR^d} e^{-\frac{1}{2}\|\Gamma_k^{1/2}t\|^2}e^{-i\langle
        t,n-m_k\rangle}\,dt,
\end{equation}
so that equations \eqref{eq:proba-fourier} and \eqref{eq:density-fourier} imply together that
\[
    D_k = \sup_{n\in \Z^d} \left|\int_T
    \psi_k(\Gamma_k^{1/2}t)\,e^{-i\langle t,n - m_k\rangle}\,dt - \int_{\RR^d} e^{-\frac{1}{2}\|\Gamma_k^{1/2}t\|^2}e^{-i\langle t,n-m_k\rangle}\,dt \right|.
\]
We split the domain of integration according to the partition $(T\setminus\Ecal_k) \cup (T\cap \Ecal_k) \cup (\RR^d \setminus (T\cup \Ecal_k))$ of $\RR^d$ and we use the triangular inequality:
\begin{equation*}
    \label{eq:splitting}
    D_k \leq \int_{T\setminus \Ecal_k} |\psi_k(\Gamma_k^{1/2}t)|\,dt + \int_{T\cap\Ecal_k} \left|\psi_k(\Gamma_k^{1/2} t)
        - e^{-\frac{1}{2}\|\Gamma_k^{1/2}t\|^2}\right|\,dt
    + \int_{\RR^d \setminus (T\cap\Ecal_k)} e^{-\frac{1}{2}\|\Gamma_k^{1/2}t\|^2}\,dt .
\end{equation*}
Because of the assumption on $\varphi_k(u)=\psi_k(t)$ in the bounded domain $T\setminus \Ecal_k$, the contribution of the first term of the right-hand side is $\bigO(a_k)$. The two other terms are respectively handled by Lemma~\ref{lem:central} and Lemma~\ref{lem:tails} below.
\end{proof}

\begin{lemma}[Central approximation]
    \label{lem:central}
    Under the assumption $L_k = \bigO(a_k \sqrt{\det \Gamma_k})$ of Proposition~\ref{prop:framework},
    \[
        \limsup_{k\to\infty} \frac{1}{a_k}\int_{\Ecal_k} \left|\psi_k(\Gamma_k^{1/2} t)
        - e^{-\frac{1}{2}\|\Gamma_k^{1/2}t\|^2}\right|\,dt < \infty.
    \]
\end{lemma}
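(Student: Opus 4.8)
The plan is to follow the classical Berry--Esseen route of Esseen's smoothing inequality, adapted to the normalized vector $Y_k = \Gamma_k^{-1/2}(S-m_k)$, whose characteristic function is $\psi_k(\Gamma_k^{1/2}t)$. First I would change variables to $u = \Gamma_k^{1/2}t$, turning the integral over the ellipsoid $\Ecal_k$ into an integral over the ball $\{\|u\| \leq 1/(4L_k)\}$, at the cost of a Jacobian factor $(\det \Gamma_k)^{-1/2}$. Thus it suffices to show
\[
    \int_{\|u\| \leq 1/(4L_k)} \bigl|\psi_k(u) - e^{-\frac12\|u\|^2}\bigr|\,du = \bigO\!\left(L_k\right),
\]
since $L_k = \bigO(a_k\sqrt{\det\Gamma_k})$ is exactly the hypothesis. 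The quantity $\psi_k(u)$ is the characteristic function of a sum of independent, centred, identity-covariance lattice vectors $\Gamma_k^{-1/2}(\xi_j - \EE_k\xi_j)$, and $L_k$ plays the role of the Lyapunov third-moment ratio for this normalized array, so the estimate above is precisely the content of the standard multivariate Berry--Esseen characteristic-function bound (see e.g.\ \citep[Chapter 2]{bhattacharya_normal_2010}).

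The key steps, in order, are the following. (1) Write $\psi_k(u) = \prod_{j\in J}\psi_{k,j}(u)$ where $\psi_{k,j}$ is the characteristic function of $\Gamma_k^{-1/2}(\xi_j - \EE_k\xi_j)$, and expand each factor via a third-order Taylor estimate: $\log\psi_{k,j}(u) = -\tfrac12\langle u, \Sigma_{k,j}u\rangle + \theta_{k,j}(u)$ with $\sum_j \Sigma_{k,j} = I$ and a remainder controlled by the third absolute moment, $|\theta_{k,j}(u)| \leq C\|u\|^3\,\EE_k|\langle u/\|u\|, \xi_j - \EE_k\xi_j\rangle|^3 / \|u\|^{?}$ --- more precisely, summing, $\sum_j |\theta_{k,j}(u)| \leq C\|u\|^3 L_k$ by the very definition of $L_k$ together with the scale-independence (the supremum in $L_k$ is over all directions). (2) On the ball $\|u\| \leq 1/(4L_k)$, this gives $\sum_j|\theta_{k,j}(u)| \leq C\|u\|$ and also $\sum_j\Re\theta_{k,j}(u) \leq \tfrac14\|u\|^2$ (after shrinking the radius or the constant), so that $|\psi_k(u)| \leq e^{-\|u\|^2/4}$ on the whole domain. (3) Use the elementary inequality $|e^{a} - e^{b}| \leq |a-b|\max(e^{\Re a}, e^{\Re b})$ with $a = \log\psi_k(u)$ and $b = -\tfrac12\|u\|^2$ to conclude
\[
    \bigl|\psi_k(u) - e^{-\frac12\|u\|^2}\bigr| \leq C\,\|u\|^3 L_k\, e^{-\|u\|^2/4}.
\]
(4) Integrate over $\RR^d$ (extending the domain only makes the bound larger since the integrand is non-negative): $\int_{\RR^d}\|u\|^3 e^{-\|u\|^2/4}\,du$ is a finite constant, which yields the bound $\bigO(L_k)$ and hence, after the Jacobian, $\bigO(a_k)$.

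The main obstacle is step (1)--(2): justifying the Taylor expansion of the logarithm of the product uniformly over $j$ and controlling the remainder purely in terms of $L_k$. One must be careful that $\|u\|$ small does not by itself make each individual $\langle u, \xi_j - \EE_k\xi_j\rangle$ small (the $\xi_j$ may be large), so the standard trick is to use the inequality $|e^{iy} - 1 - iy + \tfrac12 y^2| \leq \tfrac16|y|^3$ applied to $y = \langle u, \xi_j - \EE_k\xi_j\rangle$, take expectations, and sum, obtaining $|\psi_k(u) - \prod_j(1 - \tfrac12\langle u,\Sigma_{k,j}u\rangle)|$-type bounds that feed directly into $L_k\|u\|^3$ after noting $\|u\|^3 L_k \leq \tfrac{1}{4^3}$ keeps everything in the regime where $|1 + z| \leq e^{|z|}$ and $|\log(1+z) - z| \leq |z|^2$ are applicable. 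Everything else --- the change of variables, the Gaussian tail integral, the final elementary two-exponentials inequality --- is routine. I would cite \citep[pp.\ 57--65]{bhattacharya_normal_2010} for the precise form of these estimates rather than reproduce them in full.
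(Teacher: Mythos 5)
Your overall route---change variables $u=\Gamma_k^{1/2}t$, compare $\psi_k(u)$ with $e^{-\frac12\|u\|^2}$ factor by factor using third moments, integrate a pointwise bound of the form $L_k\|u\|^3e^{-c\|u\|^2}$---is the right one, and it is also what the paper does. But there is a genuine gap in steps (1)--(2), and it is precisely the point you flag as ``the main obstacle'' but then dismiss too quickly.

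Your control of $\sum_j|\theta_{k,j}(u)|$ by $C L_k\|u\|^3$ on the \emph{whole} ball $\|u\|\le (4L_k)^{-1}$ does not hold. First, your parenthetical claim that $\|u\|\le(4L_k)^{-1}$ keeps $\|u\|^3L_k\le 4^{-3}$ is simply false: from $\|u\|\le(4L_k)^{-1}$ one gets $\|u\|^3L_k\le (64L_k^2)^{-1}$, which blows up as $L_k\to 0$. The inequality $\|u\|^3L_k\le 4^{-3}$ holds only on the much smaller ball $\|u\|\le\frac12 L_k^{-1/3}$. This matters: on that inner ball each $\psi_{k,j}(\Gamma_k^{-1/2}u)$ is close to $1$ (Jensen gives $\EE_k|\langle t,\overline\xi_j\rangle|^2\le(\EE_k|\langle t,\overline\xi_j\rangle|^3)^{2/3}\le(L_k\|u\|^3)^{2/3}$, all $\le \frac14$), so the logarithm is well defined and the quadratic remainder $\sum_j(\EE_k|\langle t,\overline\xi_j\rangle|^2)^2\le (L_k\|u\|^3)^{2/3}\|u\|^2$ is indeed $O(L_k\|u\|^3)$. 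But in the outer shell $\frac12 L_k^{-1/3}\le\|u\|\le\frac14 L_k^{-1}$ one has $(L_k\|u\|^3)^{2/3}\|u\|^2 / (L_k\|u\|^3)=L_k^{-1/3}\|u\|\to\infty$, so the quadratic remainder can dominate $L_k\|u\|^3$; worse, there is no guarantee any more that the individual $\psi_{k,j}$ stay away from $0$, so $\log\psi_{k,j}$ need not be defined at all. Your outline therefore only proves the claimed pointwise inequality on $\|u\|\le\frac12 L_k^{-1/3}$.

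The missing ingredient is a separate, cruder treatment of the shell $\frac12 L_k^{-1/3}\le\|u\|\le\frac14 L_k^{-1}$, which is exactly what the paper supplies. There, one exploits $L_k\|u\|^3\ge 8$: the triangle bound $|\psi_k(u)-e^{-\frac12\|u\|^2}|\le|\psi_k(u)|+e^{-\frac12\|u\|^2}$ is already $\le 2e^{-\frac13\|u\|^2}\le 16 L_k\|u\|^3 e^{-\frac13\|u\|^2}$, once the uniform decay $|\psi_k(u)|\le e^{-\frac13\|u\|^2}$ on the whole ellipsoid is known. The paper obtains that decay \emph{without} taking logarithms, via a symmetrisation of each $\xi_j$ and the elementary bound $1-x\le e^{-x}$ (equations~\eqref{eq:wj_bound}--\eqref{eq:psik_bound}); it then handles the inner ball with the telescoping product inequality~\eqref{eq:feller_ineq} instead of expanding $\log\psi_k$. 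Your plan would go through if you added the shell estimate and replaced the logarithm expansion on the inner ball by either the paper's product inequality or a properly justified log expansion restricted to $\|u\|\le\frac12 L_k^{-1/3}$; as written it is incomplete.
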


\begin{proof}
After the substitution $u = \Gamma_k^{1/2} t$, and because $\|u\|^3e^{-\frac{1}{3}\|u\|^2}$ is integrable on $\RR^d$, we see that we need only prove the following inequality in the domain $\|u\| \leq \frac{1}{4}L_k^{-1}$:
    \begin{equation}
        \label{eq:petrov}
        \left|\psi_k(u) - e^{-\frac{1}{2}\|u\|^2}\right| \leq 16 L_k \|u\|^3 e^{-\frac{1}{3}\|u\|^2},
    \end{equation}
    
    We now turn to the proof of \eqref{eq:petrov}. For all $j \in J$, let $\xi_j'$ be an independent copy of $\xi_j$. Then $\xi_j - \xi_j'$ has zero mean, its second moments are twice those of the centered random variable $\overline{\xi}_j := \xi_j - \EE_k \xi_j$, and $\EE_k|\inp{t,\xi_j-\xi_j'}|^3 \leq 8 \EE_k|\inp{t,\overline{\xi}_j}|^3$. Using  $|\EE_k[e^{i\inp{t,\overline{\xi}_j}}]|^2 = \EE_k[e^{i\inp{t,\xi_j-\xi_j'}}]$ and  the classical Taylor expansion estimate of the characteristic function for $\xi_j - \xi_j'$, we thus have for all $t \in \RR^d$,
\[
        |\EE_k[e^{i\inp{t,\overline{\xi}_j}}]|^2 \leq 1 - \frac{2}{2!}\EE_k|\inp{t,\overline{\xi}_j}|^2 + \frac{8}{3!}\EE_k|\inp{t,\overline{\xi_j}}|^3.
\]
Since $1 - x \leq e^{-x}$ for all $x \in \RR$, we deduce
\begin{equation}
    \label{eq:wj_bound}
    |\EE_k[e^{i\inp{t,\overline{\xi}_j}}]| \leq \exp\left\{-\frac{1}{2}\EE_k|\inp{t,\overline{\xi}_j}|^2 + \frac{2}{3}\EE_k|\inp{t,\overline{\xi}_j}|^3\right\},
\end{equation}
    so that, for all $u = \Gamma_k^{1/2} t$ satisfying $\|u\| \leq \frac{1}{4}L_k^{-1}$, the definitions of $\Gamma_k$ and $L_k$ imply
    \begin{equation}
        \label{eq:psik_bound}
        |\psi_k(u)|^2 = \prod_{j\in J} |\EE_k[e^{i\inp{t,\overline{\xi}_j}}]|^2 \leq \exp\left\{-\|\Gamma_k^{1/2}t\|^2 + \frac{4}{3}L_k \|\Gamma_k^{1/2}t\|^3\right\} \leq \exp\left\{-\frac{2}{3}\|u\|^2\right\}.
    \end{equation}
Let us begin with the case $\frac{1}{2}L_k^{-1/3} \leq \|u\| \leq \frac{1}{4}L_k^{-1}$. 
    In this domain, $\|u\| \geq \frac{1}{2}L_k^{-1/3}$ implies $L_k\|u\|^3 \geq 8$. Using $|\psi_k(u)-e^{-\frac{1}{2}\|u\|^2}| \leq |\psi_k(u)|+e^{-\frac{1}{2}\|u\|^2}$ and $e^{-\frac{1}{2}\|u\|^2} \leq e^{-\frac{1}{3}\|u\|^2}$, we see that \eqref{eq:petrov} holds.
    
    We continue with the remaining case: $\|u\| \leq \frac{1}{4}L_k^{-1}$ and $\|u\| \leq \frac{1}{2}L_k^{-1/3}$. For all $j \in J$, let $v_j(t) = \exp\{-\frac{1}{2}\EE_k|\inp{t,\overline{\xi}_j}|^2\}$ and $w_j(t) = \EE_k[e^{i\inp{t,\overline{\xi}_j}}]$. By \eqref{eq:wj_bound} we see that $0 < v_j(t) < |w_j(t)|$, so the following elementary inequality holds:
    \begin{equation}
        \label{eq:feller_ineq}
        \left|\prod_{j\in J} w_j(t) - \prod_{j\in J} v_j(t)\right| \leq \left(\prod_{j\in J} |w_j(t)|\right) \sum_{j \in J} \frac{|w_j(t) - v_j(t)|}{v_j(t)}.
    \end{equation}
    We proved in \eqref{eq:psik_bound} that the product in the right-hand side of \eqref{eq:feller_ineq} is bounded by $\exp\{-\frac{1}{3}\|u\|^2\}$. By Jensen's inequality and the definition of $L_k$, the condition $\|u\| \leq \frac{1}{2}L_k^{-1/3}$ implies that
    \[
        v_j(t) \geq \exp\left\{-\frac{1}{2}\left(\EE_k|\inp{t,\overline{\xi}_j}|^3\right)^{2/3}\right\} \geq \exp\left\{-\frac{1}{2}(L_k\|u\|^3)^2\right\} \geq \exp\left\{-\frac{1}{8}\right\} > \frac{1}{2}.
    \]
    We estimate the summand in the right-hand side of \eqref{eq:feller_ineq} using Taylor expansions of $w_j(t)$ and $v_j(t)$. Since $v_j(t) \geq 1/2$,
    \begin{align*}
        \frac{|w_j(t) - v_j(t)|}{v_j(t)} & \leq 2\left|w_j(t) - 1 + \frac{1}{2}\EE_k|\inp{t,\overline{\xi}_j}|^2\right| + 2\left|v_j(t) - 1 + \frac{1}{2}\EE_k|\inp{t,\overline{\xi}_j}|^2\right|\\
                                         & \leq \frac{1}{3}\EE_k|\inp{t,\overline{\xi}_j}|^3 + \frac{1}{4}\left(\EE_k|\inp{t,\overline{\xi}_j}|^2\right)^2 \leq \frac{1}{3}\EE_k|\inp{t,\overline{\xi}_j}|^3 + \frac{1}{4}(\EE_k|\inp{t,\overline{\xi}_j}|^3)^{4/3}\\
                                         & \leq \EE_k|\inp{t,\overline{\xi}_j}|^3.
    \end{align*}
    Summing up for all $j \in J$, we obtain $|\psi_k(u)-e^{-\frac{1}{2}\|u\|^2}|\leq L_k \|u\|^3e^{-\frac{1}{3}\|u\|^2}$ and \eqref{eq:petrov} follows.

\end{proof}

\begin{lemma}[Tails completion]
    \label{lem:tails}
    Under the assumptions $L_k = \bigO(a_k\sqrt{\det \Gamma_k})$ and $\sigma_k^{-1} = \bigO(a_k\sqrt{\det \Gamma_k})$ of Proposition~\ref{prop:framework}, 
    \begin{equation}
        \label{eq:tails_bound}
        \limsup_{k\to\infty} \frac{1}{a_k}\int_{\RR^d\setminus(T\cap \Ecal_k)} e^{-\frac{1}{2}\|\Gamma_k^{1/2}t\|^2} \,dt < \infty.
    \end{equation}
\end{lemma}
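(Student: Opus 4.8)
The plan is to push the integral forward through the linear change of variables $u=\Gamma_k^{1/2}t$, whose Jacobian is $(\det\Gamma_k)^{-1/2}$ and which turns $\|\Gamma_k^{1/2}t\|^2$ into $\|u\|^2$; the integral on the left of \eqref{eq:tails_bound} then equals
\[
  \frac{1}{\sqrt{\det\Gamma_k}}\int_{\RR^d\setminus\Gamma_k^{1/2}(T\cap\Ecal_k)} e^{-\frac12\|u\|^2}\,du .
\]
Since $\Gamma_k^{1/2}$ is a linear bijection, $\Gamma_k^{1/2}(T\cap\Ecal_k)=\Gamma_k^{1/2}(T)\cap\Gamma_k^{1/2}(\Ecal_k)$. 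By the very definition of $\Ecal_k$, the set $\Gamma_k^{1/2}(\Ecal_k)$ is exactly the Euclidean ball $\{\|u\|\le(4L_k)^{-1}\}$; and since the box $T=[-\pi,\pi]^d$ contains the ball $\{\|t\|\le\pi\}$ while the smallest eigenvalue of $\Gamma_k^{1/2}$ equals $\sigma_k$, the image $\Gamma_k^{1/2}(T)$ contains the ball $\{\|u\|\le\pi\sigma_k\}$. Hence $\Gamma_k^{1/2}(T\cap\Ecal_k)\supseteq\{\|u\|\le\rho_k\}$, where $\rho_k:=\min\{\pi\sigma_k,(4L_k)^{-1}\}>0$, and the quantity above is at most $(\det\Gamma_k)^{-1/2}\int_{\|u\|>\rho_k}e^{-\|u\|^2/2}\,du$.

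The next step is the elementary Gaussian tail bound: for every $R\ge 0$, on $\{\|u\|>R\}$ one has $\tfrac12\|u\|^2\ge\tfrac14\|u\|^2+\tfrac14 R^2$, so $\int_{\|u\|>R}e^{-\|u\|^2/2}\,du\le e^{-R^2/4}\int_{\RR^d}e^{-\|u\|^2/4}\,du=(4\pi)^{d/2}e^{-R^2/4}$. Taking $R=\rho_k$ and using $e^{-\rho_k^2/4}\le e^{-\pi^2\sigma_k^2/4}+e^{-1/(64L_k^2)}$, the claim \eqref{eq:tails_bound} reduces to showing that the two sequences
\[
  \frac{1}{a_k\sqrt{\det\Gamma_k}}\,e^{-\frac{\pi^2}{4}\sigma_k^2}
  \qquad\text{and}\qquad
  \frac{1}{a_k\sqrt{\det\Gamma_k}}\,e^{-\frac{1}{64L_k^2}}
\]
are bounded. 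For the first, the hypothesis $\sigma_k^{-1}=\bigO(a_k\sqrt{\det\Gamma_k})$ gives a constant $C$ with $(a_k\sqrt{\det\Gamma_k})^{-1}\le C\sigma_k$, so that sequence is dominated by $C\sigma_k e^{-\pi^2\sigma_k^2/4}$; for the second, $L_k=\bigO(a_k\sqrt{\det\Gamma_k})$ gives $(a_k\sqrt{\det\Gamma_k})^{-1}\le C' L_k^{-1}$, so that sequence is dominated by $C' L_k^{-1}e^{-1/(64L_k^2)}$. Since $x\mapsto xe^{-\pi^2x^2/4}$ and $x\mapsto x^{-1}e^{-1/(64x^2)}$ are bounded on $(0,+\infty)$, both sequences are bounded and \eqref{eq:tails_bound} follows.

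The step I expect to require the most care is not a difficulty so much as a point of discipline: in the abstract framework one may not assume $\sigma_k\to+\infty$ or $L_k\to 0$, so the two exponential tail factors cannot be dismissed as "small for the obvious reason" but must be controlled by the globally bounded functions $xe^{-cx^2}$ and $x^{-1}e^{-c/x^2}$. The one genuinely substantive observation is geometric, namely that the linear map $\Gamma_k^{1/2}$ carries the awkward intersection $T\cap\Ecal_k$ onto a set that sandwiches a true Euclidean ball whose radius is simply the minimum of the two natural scales $\pi\sigma_k$ and $(4L_k)^{-1}$; this decouples the box contribution, governed by the first hypothesis of Proposition~\ref{prop:framework}, from the ellipsoid contribution, governed by the second.
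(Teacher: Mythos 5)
Your proof is correct and follows essentially the same route as the paper's: after the change of variables $u=\Gamma_k^{1/2}t$ you observe that $\Gamma_k^{1/2}(T\cap\Ecal_k)$ contains a Euclidean ball of radius $\min\{\pi\sigma_k,(4L_k)^{-1}\}$, which is logically equivalent to the paper's union decomposition $\RR^d\setminus(T\cap\Ecal_k)=(\RR^d\setminus T)\cup(\RR^d\setminus\Ecal_k)$, and then you bound a Gaussian tail. The only real difference is in the final estimate: the paper bounds the tail integral \emph{linearly} in $L_k$ (respectively $\sigma_k^{-1}$) via the pointwise inequality $r^{d-1}\leq 4L_k\,r^{d}$ valid for $r>\tfrac14 L_k^{-1}$, which plugs directly into the hypothesis $L_k=\bigO(a_k\sqrt{\det\Gamma_k})$; you instead use an exponential Gaussian-tail bound $\int_{\|u\|>R}e^{-\|u\|^2/2}\,du\leq(4\pi)^{d/2}e^{-R^2/4}$ and then absorb the exponential factor by the global boundedness of $x\mapsto xe^{-cx^2}$ and $x\mapsto x^{-1}e^{-c/x^2}$. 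Both arguments are elementary and correct; the paper's is marginally more economical since it produces a bound that is already of the desired form without the extra boundedness observation, but your version makes the same hypotheses do the same work, and your explicit remark that one cannot presume $\sigma_k\to\infty$ or $L_k\to 0$ in this abstract framework is exactly the right point of discipline.
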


\begin{proof}
    The domain of integration splits into $\RR^d\setminus (T\cap \Ecal_k) = (\RR^d\setminus\Ecal_k) \cup (\RR^d\setminus T)$ and we deal separately with the two sub-domains, based on the inequality 
    \begin{equation}
        \label{eq:tails_split}
        \int_{\RR^d\setminus\Ecal_k} e^{-\frac{1}{2}\|\Gamma_k^{1/2}t\|^2} \,dt
        \leq \frac{1}{\sqrt{\det \Gamma_k}}\left(\int_{\{\|u\| > \frac{1}{4}L_k^{-1}\}}
        e^{-\frac{1}{2}\|u\|^2}\,du  + \int_{\{\Gamma_k^{-1/2}u \notin T\}} e^{-\frac{1}{2}\|u\|^2}\,du\right).
    \end{equation}
    Let us begin with the first summand. After the polar substitution $(r,\hat{u}) \in (0,+\infty)\times \mathbb{S}^{d-1} \mapsto u = r\hat{u}$, where $\mathbb{S}^{d-1}$ is the unit sphere of $\RR^d$, we see that it is proportional (up to the surface area of $\mathbb{S}^{d-1}$) to
    \[
        \int_{r > \frac{1}{4}L_k^{-1}} r^{d-1}e^{-\frac{1}{2}r^2}\,dr \leq 4L_k \int_0^\infty r^d e^{-\frac{1}{2}r^2}\,dr.
    \]
    Since the latter integral is finite and $L_k = \bigO(a_k\sqrt{\det \Gamma_k})$, the first summand of \eqref{eq:tails_split} yields a finite contribution in \eqref{eq:tails_bound}.

    In order to deal with the second summand, let us remark that $\Gamma_k^{-1/2}u \notin T$ implies $\|u\| > \sigma_k\pi$ so that the rest of the proof is entirely similar to the first part, except that it uses the assumption on $\sigma_k^{-1}$.   
\end{proof}

\section*{Acknowledgements}
The author would like to thank Nathana\"el Enriquez for his supervision and useful discussions about this work, as well as the anonymous referee whose careful reading and comments have helped improve the presentation of the paper.

\bibliographystyle{amsplnat}
\bibliography{references}

\end{document}